\documentclass[11pt]{article}

\usepackage[a4paper,margin=1in]{geometry}
\usepackage{amsmath,amssymb,amsthm}
\usepackage{mathtools}
\usepackage{microtype}
\usepackage[hidelinks]{hyperref}
\usepackage{xcolor}
\usepackage{dsfont}
\usepackage{bbm}
\usepackage{mathrsfs}
\usepackage{amsthm}

\newtheorem{theorem}{Theorem}[section]
\newtheorem{lemma}[theorem]{Lemma}
\newtheorem{corollary}[theorem]{Corollary}
\newtheorem*{conjecture}{Conjecture}

\theoremstyle{definition}
\newtheorem{remark}[theorem]{Remark}
\newtheorem{definition}[theorem]{Definition}
\newtheorem{example}[theorem]{Example}

\newtheorem{proposition}[theorem]{Proposition}

\newcommand{\EE}{\mathbb E}
\newcommand{\N}{\mathbb N}
\newcommand{\dto}{\xRightarrow{d}}
\newcommand{\R}{\mathbb R}

\newcommand{\PP}{\mathbb P}
\newcommand{\pto}{\xrightarrow{\PP}}
\newcommand{\II}{\mathcal I}
\newcommand{\ISSV}{\mathrm{ISSV}}

\newcommand{\ind}{\mathbbm{1}}
\newcommand{\Law}{\mathcal L}

\title{Subsequential Version of Breiman's Conjecture}
\author{Volodymyr PAVLENKOV\textsuperscript{1}}
\date{}

\begin{document}
\maketitle

\begin{flushright}
\itshape
In loving memory of my mother,\\
Tetyana M. Pavlenkova (1955--2026).
\end{flushright}
\vspace{0.5cm}

\let\thefootnote\relax\footnotetext{\textsuperscript{1}Department of Mathematics, University of York,
Heslington, York, YO10 5DD, England, {\texttt{volodymr.pavlenkov@york.ac.uk}}.}

\begin{abstract}
We study randomly weighted sums
\[
T_n=\frac{\sum_{i=1}^n X_iY_i}{\sum_{i=1}^n Y_i},
\]
where $(X_i)$ and $(Y_i)$ are independent sequences
of i.i.d.\ random variables, with nonnegative weights
$Y_i$, motivated by Breiman's conjecture.

We introduce the class $\ISSV$ of
$\II$-\emph{subsequentially slowly varying} tails and establish
a subsequential analogue of Darling's maximum--sum
theorem. This provides a \emph{subsequential generalization}
of Breiman's slowly varying theorem and,
as a \emph{corollary}, resolves the slowly varying
case of his conjecture.

We also establish a \emph{subsequential analogue} of
Breiman's nonzero-index theorem, characterizing
the existence of nondegenerate subsequential limits,
distinct in distribution from $X_1,$ by the failure
of slow variation of the integrated tail, for
weight $Y_1$ with tail distributions outside the class $\ISSV$.

Finally, we formulate the \emph{subsequential separation
principle}, prove it under a finite-second-moment
assumption, and show that its general validity
would imply the nonzero-index case of
Breiman's conjecture.
\end{abstract}

\section{Introduction}

Let $(Y_i)_{i\ge1}$ be independent copies of a non-negative random
variable $Y$, and let $(X_i)_{i\ge1}$ be independent
copies of a real random variable $X$, independent of the $(Y_i).$ 
Throughout this paper, we call $(Y_i)$ the \emph{weights} and $(X_i)$ the \emph{marks}, 
in keeping with the terminology of randomly weighted sums and marked point processes.
Define for $n \in \N,$
\begin{equation}\label{eq:Tn}
S_n=\sum_{i=1}^nY_i, \qquad T_n=T_n(X)=\frac{\sum_{i=1}^nX_iY_i}{\sum_{i=1}^nY_i}=\frac{\sum_{i=1}^nX_iY_i}{S_n},
\end{equation}
with the convention that $T_n=0$ on ${S_n=0}$.

In his seminal paper \cite{Breiman1965}, Breiman investigated the asymptotic behaviour of $T_n$, motivated by limit theorems related to the classical arcsine law. He studied the case when 
the tail distribution 
\begin{equation}\label{eq:tailH}
H(t)=\mathbb P(Y>t), \qquad t>0,
\end{equation}
is \emph{regularly varying} at infinity.

\begin{definition}
A positive measurable function $(f(x), \,\, x\ge x_0>0)$ is called regularly varying at infinity with index $\rho \in \R$ ($f \in RV_{\rho}$) if for any $\lambda>0$
$$\lim_{x \to \infty}\frac{f(\lambda x)}{f(x)} = \lambda^\rho.$$
RV function $f$ of index $\rho=0$ is called slowly varying ($f \in SV$).
\end{definition}

The following two theorems provide modern reformulations
of Breiman's classical results, distinguishing the slowly
varying and nonzero-index regularly varying cases.

\begin{theorem}[Breiman, 1965: slowly varying case]
\label{breiman-sv-thm}
The tail distribution $H$ of the weight $Y$ is $SV$
if and only if for every nondegenerate mark $X,$ independent of $Y,$ with $\EE |X| < \infty,$ one has
$$T_n \dto X.$$
\end{theorem}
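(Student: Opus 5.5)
We prove the two implications separately. Both rest on Darling's maximum--sum theorem: $H\in SV$ if and only if $M_n/S_n\pto 1$, where $M_n=\max_{i\le n}Y_i$. We use it in both directions.

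\emph{Sufficiency.} Assume $H\in SV$. Let $\tau_n$ be the (a.s.\ eventually unique, otherwise smallest) index with $Y_{\tau_n}=M_n$. Because the weights are independent of the marks, $X_{\tau_n}$ has the law of $X$ for every $n$. Write
\[
T_n - X_{\tau_n}=\sum_{i\neq\tau_n}(X_i-X_{\tau_n})\frac{Y_i}{S_n},
\]
so that
\[
|T_n-X_{\tau_n}|\le \frac{1}{S_n}\sum_{i\ne\tau_n}|X_i|Y_i + |X_{\tau_n}|\Bigl(1-\frac{M_n}{S_n}\Bigr).
\]
The second term tends to $0$ in probability by Darling's theorem, since $X_{\tau_n}$ is tight. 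For the first term, condition on $(Y_i)$. Its conditional expectation is at most $\EE|X|\,(S_n-M_n)/S_n$, which tends to $0$ in probability, and it is bounded by $\EE|X|$. Bounded convergence then gives that the first term tends to $0$ in $L^1$, hence in probability. Thus $T_n-X_{\tau_n}\pto0$, and since $X_{\tau_n}\overset{d}{=}X$, Slutsky gives $T_n\dto X$. This half uses only integrability of $X$ and is routine once Darling's theorem is available.

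\emph{Necessity.} Assume $T_n\dto X$ for every nondegenerate integrable $X$. Take $X$ to be Rademacher, $\PP(X=\pm1)=\tfrac12$. Conditionally on the weights, $T_n=\sum_i \varepsilon_i w_i$ with $w_i=Y_i/S_n$ and $\sum_i w_i=1$, so
\[
\EE[T_n^2\mid Y]=\sum_i w_i^2=:V_n .
\]
Since $|T_n|\le1$ and $T_n\dto X$ with $X^2=1$, we get $\EE T_n^2\to1$, hence $\EE V_n\to1$. As $V_n\le1$, this forces $V_n\pto1$. Next we use the elementary bound $\sum_i w_i^2\le \max_i w_i = M_n/S_n\le1$. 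It gives $M_n/S_n\pto 1$, and the converse half of Darling's theorem yields $H\in SV$.

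One degenerate situation needs a separate check. If $Y\equiv0$ with positive probability, then $S_n=0$ occurs with probability tending to zero unless $Y=0$ a.s. When $Y=0$ a.s. we have $T_n=0$, which contradicts the hypothesis, so this case is excluded.

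\emph{Main obstacle.} The real difficulty sits inside Darling's theorem, specifically its converse direction ($M_n/S_n\pto1\Rightarrow H\in SV$). If that result could not be cited, we would prove it via Laplace transforms. Using the representation
\[
\EE[S_n/M_n]=1+n\int_0^\infty \EE\bigl[Y e^{-sY}\bigr](\dots)\,ds,
\]
one shows that $\EE[S_n/M_n]\to1$ forces $t\,\EE[Y\ind_{Y\le t}]/H(t)\cdot (1/t)\to0$, that is, $\int_0^t H(u)\,du=o(tH(t))$. Karamata's theorem then translates this into slow variation of $H$.
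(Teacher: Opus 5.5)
Your argument is correct and is essentially the paper's. The sufficiency half is the maximum-index coupling $T_n-X_{I_n}\pto0$ sketched after \eqref{darling-sv} and made rigorous in Lemma~\ref{lem_maximum-weight}. The necessity half runs through the same inequality $Q_n=\sum_i p_{n,i}^2\le M_n/S_n$. There are only two differences. First, you exploit the ``every mark'' hypothesis by choosing a Rademacher mark, so $\EE Q_n\to1$ follows from second moments; the paper's $\kappa$-argument instead gets it from a single arbitrary nondegenerate integrable mark, which is what yields Corollary~\ref{cor_breimanSV}. Second, you cite Darling's theorem, whereas the paper rederives it from Theorems~\ref{thm_subseq-Darling} and~\ref{SV_from_I} with $n_k=k$.

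Two small repairs are needed. To invoke the converse of \eqref{darling-sv}, you should also rule out bounded $Y$ with $\PP(Y>0)>0$; there $M_n/S_n\to0$ a.s., so this is immediate. Your fallback sketch states an impossible condition, since $\int_0^tH(u)\,du\ge tH(t)$. The correct one is $\EE[Y\ind_{\{Y\le t\}}]=\int_0^tH(u)\,du-tH(t)=o(tH(t))$, i.e.\ $\int_0^tH(u)\,du\sim tH(t)$.
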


\begin{theorem}[Breiman, 1965: nonzero-index regularly varying case]
\label{breiman-rv-thm}
The tail distribution $H$ of the weight $Y$ is
$RV_{-\alpha},$ for some
    $\alpha\in(0,1),$
if and only if for every nondegenerate mark $X,$ independent of $Y,$ with $\EE |X| < \infty,$
the sequence $T_n$ converges in
    distribution to a nondegenerate random
    variable $Z_\alpha(X)$ satisfying
    $$
        \mathcal L(Z_\alpha(X))\ne\mathcal L(X).
    $$
\end{theorem}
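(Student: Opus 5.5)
We prove the two directions of Theorem~\ref{breiman-rv-thm} separately, working throughout with the Laplace-transform / point-process representation of the normalized weights. Assume first $H\in RV_{-\alpha}$ with $\alpha\in(0,1)$. Then $Y$ is in the domain of attraction of a positive $\alpha$-stable law, with norming $a_n$ satisfying $nH(a_n)\to1$. The point process $\sum_{i\le n}\delta_{Y_i/a_n}$ converges to a Poisson random measure $N$ with intensity $\alpha x^{-\alpha-1}dx$. Attach the independent marks $X_i$, which gives the marked process $\sum_i \delta_{(Y_i/a_n,X_i)}$ with limit $N\otimes\Law(X)$. Since $\alpha<1$, the small points contribute negligibly to $S_n/a_n$ uniformly in $n$: by Karamata, $\EE[Y\ind\{Y\le\varepsilon a_n\}]\sim \frac{\alpha}{1-\alpha}\varepsilon a_nH(\varepsilon a_n)$, and this yields $n\,\EE[Y\ind\{Y\le \varepsilon a_n\}]/a_n\to c\varepsilon^{1-\alpha}$. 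Using $\EE|X|<\infty$, the same truncation controls $\sum X_iY_i\ind\{Y_i\le\varepsilon a_n\}/a_n$ in $L^1$. The continuous mapping theorem then gives
\[
T_n\dto Z_\alpha(X)=\frac{\sum_j X_j P_j}{\sum_j P_j},
\]
where $(P_j)$ are the points of $N$, equivalently the Poisson--Dirichlet$(\alpha,0)$ weights. Nondegeneracy holds because the weights are random and nontrivial. To see $\Law(Z_\alpha)\ne\Law(X)$, first reduce to bounded $X$ by truncation and compare a suitable functional. For instance, if $\EE X^2<\infty$, then $\mathrm{Var}\,Z_\alpha=\mathrm{Var}X\cdot\EE\sum_j p_j^2=(1-\alpha)\mathrm{Var}X<\mathrm{Var} X$. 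In general, apply this to a bounded nonconstant function of $X$; nonequality of the laws is preserved because $Z_\alpha(g(X))$ is not $g(Z_\alpha(X))$, so the cleaner route is to compare characteristic functions and use strict convexity. I would instead take $X$ two-point valued, where $Z_\alpha$ has a continuous (Lamperti-type) law while $X$ is discrete; for general $X$, I would exploit that $Z_\alpha$ has no atom at points where $X$ has none, or else use the variance argument directly when $\EE X^2<\infty$.

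For the converse, assume that for every admissible $X$ the sequence $T_n$ converges to a nondegenerate limit different in law from $X$. Take $X$ Bernoulli$(1/2)$, or better an exponential mark. With $X$ standard exponential, conditioning on $Y$ gives
\[
\EE e^{-sT_n}=\EE\prod_i\bigl(1+sY_i/S_n\bigr)^{-1}.
\]
The classical route, following Breiman, is to test with $X$ taking values $\pm1$ and compute moments. One has $\EE T_n^2=\EE\sum Y_i^2/S_n^2$, and convergence of $T_n$ for all marks forces convergence of all the mixed moments $\EE\sum_i (Y_i/S_n)^k$, $k\ge 2$. Using the identity $\EE\sum_i (Y_i/S_n)^k=n\int_0^\infty \frac{u^{k-1}}{(k-1)!}\,\EE[Y^ke^{-uY}]\,(\EE e^{-uY})^{n-1}du$, convergence of these quantities for all $k$ determines the limiting behaviour of $\varphi(u)=\EE e^{-uY}$ near $0$. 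A Tauberian argument then converts this into regular variation of $1-\varphi$ at $0$, hence of $H$. The limit of $\EE\sum_i p_i^2$ equals $1-\alpha$ for some $\alpha\in[0,1]$. The hypothesis $\Law(Z)\ne\Law(X)$ excludes $\alpha=0$, which is the SV case by Theorem~\ref{breiman-sv-thm}, while nondegeneracy excludes $\alpha=1$.

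The main obstacle is this converse Tauberian step. Convergence of $\int u^{k-1}\EE[Y^ke^{-uY}]\varphi^{n-1}du$ only along the integers $n$ must be upgraded to regular variation of $U(t)=\EE[Y\wedge t]$. I would first attempt to show, via the $k=2$ functional and the substitution $\varphi(u)^n\approx e^{-n(1-\varphi(u))}$, that the ratio $u\varphi'(u)/(1-\varphi(u))$ has a limit $\alpha$ as $u\to0$. From this limit, $RV_\alpha$ of $1-\varphi$ follows immediately, and Karamata's Tauberian theorem gives $H\in RV_{-\alpha}$.
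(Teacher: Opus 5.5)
The paper does not prove this theorem; it quotes it from Breiman. Its own tools nonetheless show where your sketch falls short, and there are two genuine gaps, one in each direction.

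Your forward direction is fine up to the identification of the limit: the point-process convergence, the Karamata bound $n\EE[Y\ind_{\{Y\le\varepsilon a_n\}}]/a_n\to c\,\varepsilon^{1-\alpha}$, and the $L^1$ control of the marked small jumps. The computation $\operatorname{Var}Z_\alpha=(1-\alpha)\operatorname{Var}X$ is also correct. Nondegeneracy, however, comes from $p_1>0$ a.s.\ together with the independence of $X_1$ from everything else (as in Lemma~\ref{lem_ranked-compactness}(iii)), not from randomness of the weights.

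The first gap is $\Law(Z_\alpha(X))\ne\Law(X)$ when $\EE X^2=\infty$. As you note yourself, truncation does not transfer the inequality. The two-point and atom arguments cover only special marks; a continuous $X$ with infinite variance is untouched. ``Strict convexity of characteristic functions'' is not an argument. The missing idea is an $L^1$ contraction, which needs only $\EE|X|<\infty$. Let $V=X-\mu$, let $V'$ be an independent copy, and set $\kappa=\EE|V|-\EE|(V+V')/2|>0$. Using $\sum_jp_j=1$, write $Z_\alpha-\mu=\sum_{i,j}p_ip_j(V_i+V_j)/2$ and condition on $(p_j)$. This gives $\EE|Z_\alpha-\mu|\le\EE|V|-\kappa\bigl(1-\EE\sum_jp_j^2\bigr)=\EE|X-\mu|-\kappa\alpha<\EE|X-\mu|$. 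This is precisely the inequality in the proof of Lemma~\ref{lem_maximum-weight}. Equivalently, $\Law(Z)=\Law(X)$ would give $T_n\dto X$, hence $M_n/S_n\pto1$ by that lemma, hence $H\in SV$ by~\eqref{darling-sv}, contradicting $\alpha>0$.

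The second gap is in the converse. Your reduction is right and coincides with Proposition~\ref{prop_quadratic_L2} and~\eqref{eq_Q_interior_bounds}. For a Rademacher mark, $T_n$ is bounded and $\EE T_n^2=d_n\to\operatorname{Var}Z$. Nondegeneracy gives $\lim d_n>0$. If $\lim d_n=1$, then $M_n/S_n\pto1$ (since $Q_n\le M_n/S_n\le1$), hence $T_n\dto X$, which is excluded.

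But the decisive step $d_n\to1-\alpha\in(0,1)\Rightarrow H\in RV_{-\alpha}$ is only announced as an attempt, and it is the whole difficulty. Integrate your $k=2$ formula by parts, with $\psi=1-\varphi$ and $r(u)=u\psi'(u)/\psi(u)$. This gives exactly $1-d_n-\PP(Y=0)^n=n(n-1)\int_0^{1-\PP(Y=0)}r(\psi^{-1}(w))\,w(1-w)^{n-2}\,dw$, a Beta$(2,n-1)$ average of $r$ at scale $1/n$. Convergence of such averages along the integers does not give $r(u)\to\alpha$ as $u\downarrow0$ without a genuine Tauberian theorem. That theorem also needs a side condition extracted from the structure of $\psi$, and you provide neither.

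The paper does not prove this step either. It imports it as the Fuchs--Joffe--Teugels theorem $d_n\to d\in(0,1)\iff H\in RV_{-(1-d)}$, quoted in Subsection~\ref{ssec_full_conv}; Breiman's Theorem~4 would serve as well. Citing that result closes your converse; as written it is incomplete. The detour through all the moments $\EE\sum_i(Y_i/S_n)^k$ is unnecessary.
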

Here and throughout, we use the standard notation
$\mathcal{L}(\cdot)$ for the law of a random variable.

\begin{remark}\label{rem_differ_RV_SV}
Theorems~\ref{breiman-sv-thm} and
\ref{breiman-rv-thm} are modern reformulations
of Breiman's original results
\cite[Proposition~2 and Theorems~3--4]{Breiman1965}.
In particular, Breiman established the converse
implications jointly in \cite[Theorem~4]{Breiman1965},
rather than as two separate characterizations.

The distinction between the slowly varying and nonzero-index regularly varying cases is also reflected in the limiting representation~\eqref{Z_Xrepresentation} and the maximum-weight characterization established below.
\end{remark}

In the same paper \cite{Breiman1965}, Breiman conjectured that the requirement
of convergence \emph{for every} nondegenerate integrable mark $X$
in the converse direction of his results could be weakened
to convergence \emph{for a single} such mark. This became known as \textbf{Breiman's conjecture}.



A partial resolution of Breiman's conjecture was
obtained by Mason and Zinn~\cite{MasonZinn2005}
under the stronger moment assumption
\[
\EE|X|^p<\infty
\qquad\text{for some }p>2.
\]
Subsequently, Kevei and Mason~\cite{KeveiMason2016}
extended the converse implication to a broader
class of mark distributions, although the conjecture
remained unresolved under the original integrability
assumption. More recently, a preprint by
Lenzi~\cite{Lenzi2026v3} claims to establish
the conjecture in full generality under the assumption
\[
\EE|X|<\infty.
\]

The transition from full-sequence convergence to
subsequential convergence is classical in probability
theory, particularly in the theory of partial attraction
(see~\cite{Feller1971}), which also provides a foundation
for the theory of semistable laws.
In the setting of randomly weighted means $T_n$,
certain aspects of subsequential convergence were
previously investigated by Kevei and
Mason~\cite{KeveiMason2012}.

The aim of the present paper is to develop subsequential
analogues of Theorems~\ref{breiman-sv-thm} and
\ref{breiman-rv-thm}.
For the slowly varying case, we introduce a new class
of distributional tails, defined through an integrated-tail
condition $\ISSV$ (Definition~\ref{def_ISSV}), which strictly contains
the classical slowly varying tails.
We establish a subsequential analogue of Darling's
maximum--sum theorem (Theorem~\ref{thm_subseq-Darling}),
characterizing the dominance of the largest observation
over the sum along deterministic subsequences.
Building on this result, we obtain a necessary and
sufficient condition for the existence of a
deterministic subsequence, depending only on $H$,
along which $T_n(X)$ converges in distribution to
$X$ for every integrable mark $X$.
Thus, our approach extends Breiman's slowly varying
theorem to the subsequential setting
(Theorem~\ref{thm_subsequent_braimanSV})
and, as a consequence, resolves the slowly varying
case of Breiman's conjecture
(Corollary~\ref{cor_breimanSV}).

For the nonzero-index case, we establish a
subsequential analogue of Breiman's theorem (Theorem~\ref{thm_subsequent_breimanRV})
without assuming regular variation of the tail.
More precisely, for $H\notin\ISSV$, we show that
the existence of a nondegenerate subsequential
limit for some integrable nondegenerate mark $X$
is equivalent to the condition
\[
A(u):=\int_0^u H(t)\,dt\notin SV.
\]
Moreover, under this condition, there exists
a deterministic subsequence $n_k\to\infty$,
depending only on $H$, such that
\[
T_{n_k}(X)\dto Z_X
\]
for every nondegenerate integrable $X$,
where $Z_X$ is nondegenerate and
$\Law(Z_X)\ne\Law(X)$.

Finally, we investigate the structure of subsequential limiting laws and propose the \textbf{subsequential separation principle}. We establish this principle under a finite-second-moment assumption and show that its validity for arbitrary integrable marks would imply the nonzero-index case of Breiman's conjecture. Thus, it provides a natural strengthening of the classical conjecture and a direction for further research.

\section{$\II$-subsequential slow variation and a subsequential analogue of
Theorem~\ref{breiman-sv-thm}}

The slowly varying case of Breiman's theorem
(Theorem~\ref{breiman-sv-thm}) is closely related
to the classical problem of the influence of the
maximum summand on the sum of independent random
variables, studied by Darling \cite{Darling1952}.
Let
$$
    M_n=\max_{1\le i\le n}Y_i,
    \qquad
    S_n=\sum_{i=1}^n Y_i.
$$
For i.i.d. nonnegative weights with an unbounded
distribution, the classical maximum-dominance
characterization states that
\begin{equation}\label{darling-sv}
    H\in SV
    \quad\Longleftrightarrow\quad
    \frac{M_n}{S_n}\pto 1.
\end{equation}

This phenomenon provides a direct probabilistic
explanation of Breiman's slowly varying theorem.
Indeed, let $I_n$ be an index at which the maximum
$M_n$ is attained, chosen independently of the marks.
On the event $S_n>0$, we have
\[
    T_n-X_{I_n}
    =\sum_{i\ne I_n}
       \frac{Y_i}{S_n}(X_i-X_{I_n}).
\]
Since the marks are independent of the weights,
the convergence $M_n/S_n\pto1$
implies $T_n-X_{I_n}\pto0$ for
every integrable mark $X$. Moreover,
$X_{I_n}\stackrel{d}{=}X$, and consequently
$T_n\dto X$.

Our aim in this section is to generalize the notion of
slow variation in order to establish a subsequential
analogue of Darling's result \eqref{darling-sv},
which in turn leads to a subsequential analogue
of Theorem~\ref{breiman-sv-thm}.

\subsection{An integrated tail-level functional}

Let $Y,Y_1,Y_2,\ldots$ be independent identically distributed
nonnegative random variables with unbounded support and tail distribution $H$ given by~\eqref{eq:tailH}. Then $H$ is nonincreasing and right-continuous,
$H(x)>0$ for all $x\ge0$, and $H(x)\downarrow0$ as $x\to\infty$.

Define the decreasing tail quantile
\begin{equation}\label{defQ(p)}
  Q(p)=\inf\{x\ge0:H(x)\le p\},\qquad 0<p<1.
\end{equation}

Now we can define an integrated tail-level functional.
\begin{definition}\label{integrated_funct}

For $t>0$, define
\begin{equation}\label{Rt-def}
R_t(u)
=
\frac{t}{Q(u/t)}
\int_{u/t}^{1}Q(v)\,dv,
\qquad 0<u<tH(0).
\end{equation}
For $A>1$ and $t>A/H(0)$, define the
corresponding integrated tail-level functional
\begin{equation}\label{I-def}
\II_H(t;A)
=
\sup_{A^{-1}\le u\le A}R_t(u).
\end{equation}
The functional is well-defined for $A>1$ and
$t>A/H(0)$, since $Q(u/t)>0$ for every
$u\in[A^{-1},A]$.
\end{definition}

For $0<p<1$ and $x=Q(p)$, we have
$H(y)>p$ for $y<x$ and $H(y)\le p$
for $y\ge x$.
Thus, by Tonelli's theorem and the
layer-cake representation,
\begin{equation}\label{layercakeQ}
\int_p^1 Q(v)\,dv
=\int_0^x\bigl(H(y)-p\bigr)\,dy
=\int_0^x H(y)\,dy-px.
\end{equation}
Equality \eqref{layercakeQ} allows us to rewrite
the functional~\eqref{I-def} in the following form:
\begin{equation}\label{I-def_tail}
\II_H(t;A)
=
\sup_{A^{-1}\le u\le A}
\left(
\frac{t}{Q(u/t)}
\int_0^{Q(u/t)} H(y)\,dy-u
\right).
\end{equation}


The following lemma establishes two elementary useful properties
of $R_t$, defined by~\eqref{Rt-def}.

\begin{lemma}\label{Rt-properties-lemma}
The following statements hold.

\emph{(i)} For $s,t>0$ and $u>0$, whenever both sides
are defined,
\begin{equation}\label{eq:Rt-rescaling}
R_s(u)=\frac{s}{t}R_t\left(\frac{tu}{s}\right).
\end{equation}

\emph{(ii)} For $0<u<v<tH(0)$,
\begin{equation}\label{eq:Rt-level}
R_t(v)\ge R_t(u)-(v-u).
\end{equation}
\end{lemma}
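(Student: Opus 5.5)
Both statements follow directly from the definition \eqref{Rt-def} together with the monotonicity of $Q$ and of $H$. No probabilistic input is needed.

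For (i), the argument of $Q$ in $R_s(u)$ is $u/s$. Put $u'=tu/s$, so that $u'/t=u/s$. Then
\[
R_t(u')=\frac{t}{Q(u/s)}\int_{u/s}^1 Q(v)\,dv .
\]
Multiplying by $s/t$ gives exactly $R_s(u)$. The domain conditions also match: $0<u<sH(0)$ holds if and only if $0<u'<tH(0)$. So (i) is a one-line substitution.

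For (ii), I will use the tail form. Write $x=Q(u/t)$ and $y=Q(v/t)$. Since $Q$ is nonincreasing and $u/t<v/t$, we have $y\le x$. Set $\Phi(z)=\frac1z\int_0^z H$ for $z>0$. By \eqref{layercakeQ}, as in \eqref{I-def_tail},
\[
R_t(u)=t\,\Phi(x)-u, \qquad R_t(v)=t\,\Phi(y)-v .
\]
The claim $R_t(v)\ge R_t(u)-(v-u)$ therefore reduces to $\Phi(y)\ge\Phi(x)$. This holds because $H$ is nonincreasing, so its running average $\Phi$ is nonincreasing in $z$, and $y\le x$.

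The only delicate point is a degenerate case. The identity \eqref{layercakeQ} with $x=Q(p)$ needs $x>0$, and $Q(v/t)=0$ is possible when $v/t\ge H(0)$. This is excluded by the hypothesis $v<tH(0)$, which gives $Q(v/t)>0$, the same observation made after Definition~\ref{integrated_funct}. With both quantiles positive, the monotonicity of averages finishes (ii).
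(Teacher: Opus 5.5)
Your proof is correct. Part (i) is the same direct substitution as in the paper, but for (ii) you take a different route.

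The paper stays in the quantile form \eqref{Rt-def}. It bounds $t\int_{v/t}^{1}Q\ge t\int_{u/t}^{1}Q-(v-u)Q(u/t)$ and then enlarges the denominator from $Q(v/t)$ to $Q(u/t)$. You instead pass through \eqref{layercakeQ} to the tail form, which gives the exact identity
\[
R_t(v)-R_t(u)+(v-u)=t\bigl(\Phi(Q(v/t))-\Phi(Q(u/t))\bigr).
\]
So \eqref{eq:Rt-level} reduces to the monotonicity of the running average $\Phi$ of the nonincreasing function $H$.

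Your version identifies the slack in \eqref{eq:Rt-level} exactly. It also ties the inequality to the Karamata-type ratio $\int_0^x H/(xH(x))$ that appears in \eqref{rturepresent}. The paper's version uses only monotonicity of $Q$ and does not need the layer-cake identity.

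The two are the same fact in different coordinates. By \eqref{layercakeQ}, $\Phi(Q(p))=Q(p)^{-1}\int_p^1Q(w)\,dw+p$, and the paper's two-step chain is precisely a proof that this quantity is nondecreasing in $p$.

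One small precision: \eqref{layercakeQ} itself also holds when $Q(p)=0$, since both sides then vanish. Positivity of $Q(u/t)$ and $Q(v/t)$ is needed only so that you can divide by them, and $v<tH(0)$ secures this, as you say.
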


\begin{proof}[Proof of Lemma~\ref{Rt-properties-lemma}]
Statement (i) follows directly from the definition~\eqref{Rt-def}.


To prove (ii) one notes that for $0<u<v<tH(0),$ monotonicity of $Q$
gives
\[
t\int_{v/t}^{1}Q(w)\,dw
\ge
t\int_{u/t}^{1}Q(w)\,dw
-(v-u)Q(u/t).
\]
Since $Q(v/t)\le Q(u/t)$ and the integral
is nonnegative, we obtain
\[
R_t(v)
=\frac{t}{Q(v/t)}
  \int_{v/t}^{1}Q(w)\,dw
\ge
\frac{t}{Q(u/t)}
  \int_{v/t}^{1}Q(w)\,dw
\ge R_t(u)-(v-u).
\]
\end{proof}
\begin{remark}
Inequality~\eqref{eq:Rt-level} provides a one-sided
Lipschitz-type bound for $R_t$. In particular, if
$R_t(u)\ge\delta>0$, then
\[
R_t(v)\ge\frac{\delta}{2}
\]
for every $v\in[u,u+\delta/2]$ within the domain
of $R_t$.
\end{remark}

One of the motivations for introducing the integrated
tail-level functional is that it provides a
characterization of slow variation for the tail
distributions of nonnegative random variables.
The following theorem makes this connection precise.

\begin{theorem}\label{SV_from_I}
Let $H$ be the tail
distribution function of a
nonnegative random variable with unbounded
support. Then
\begin{equation}\label{ITSSV-SV}
H\in SV
\quad\Longleftrightarrow\quad
\lim_{t\to\infty}\II_H(t;A)=0
\qquad\text{for every }A>1.
\end{equation}
\end{theorem}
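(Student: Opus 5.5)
The plan is to reduce both directions of \eqref{ITSSV-SV} to the classical Karamata-type characterization
\[
H\in SV \quad\Longleftrightarrow\quad \frac{A(x)}{xH(x)}\to 1 \quad (x\to\infty),\qquad A(x):=\int_0^x H(y)\,dy .
\]
The bridge is the tail form \eqref{I-def_tail}. Put $p=u/t$ and $x=Q(p)$. Then $H(x)\le p\le H(x-)$, and
\[
R_t(u)=\frac{tA(x)}{x}-u=u\Bigl(\frac{A(x)}{xp}-1\Bigr).
\]
Since $u\in[A^{-1},A]$ is bounded away from $0$ and $\infty$, the condition $\II_H(t;A)\to0$ says that $A(Q(p))/(pQ(p))-1$ is small from above, uniformly for $p\in[A^{-1}/t,A/t]$ as $t\to\infty$. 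One should note that $\II_H$ is a supremum, so it only controls the positive part. However, $A(x)\ge xH(x)$, and this ratio is at least $H(x)/p$, which is nearly $1$ once jumps are controlled. I will track this point carefully.

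\emph{($\Rightarrow$).} Assume $H\in SV$. Karamata's theorem gives $A(x)\sim xH(x)$. Moreover $H(x-)\le H(\lambda x)$ for every $\lambda<1$, so $H(x-)/H(x)\to1$. With $x=Q(u/t)\to\infty$ uniformly in $u\in[A^{-1},A]$, the sandwich $H(x)\le u/t\le H(x-)$ gives $p\sim H(x)$, hence $A(x)/(xp)\to1$ uniformly. By the identity above, $\sup_u R_t(u)\to0$.

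\emph{($\Leftarrow$).} Assume $\II_H(t;A)\to0$; using it for a single $A$ will suffice. Take $u=1$ and $t=1/p$. This gives $\limsup_{p\to0}A(Q(p))/(pQ(p))\le1$.

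At a point $x$ in the range of $Q$, the lower bound $A(x)\ge xH(x)$ together with the choice $p=H(x)$ yields $A(x)/(xH(x))\to1$ along such points. If $H$ has a jump at $x$, then every $p\in[H(x),H(x-))$ has $Q(p)=x$. Choosing $p$ close to $H(x-)$ gives
\[
\frac{A(x)}{xH(x-)}\le 1+o(1),
\]
and combined with $A(x)\ge xH(x)$ this forces $H(x-)/H(x)\to1$.

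Points outside the range of $Q$ lie in a flat stretch $(a,b]$ with $a=Q(H(a))$. There $A(x)=A(a)+(x-a)H(a)$, and the ratio $A(x)/(xH(a))$ is a convex combination of $A(a)/(aH(a))\approx1$ and $1$, so it is also close to $1$. Hence $A(x)/(xH(x))\to1$ along all $x\to\infty$.

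To conclude, write $A'(x)=H(x)$ almost everywhere, so that $xA'(x)/A(x)\to1$. Then
\[
A(x)=A(x_0)\exp\int_{x_0}^x\frac{1+o(1)}{y}\,dy,
\]
so $A\in RV_1$ and therefore $H(x)=A(x)/\bigl(x(1+o(1))\bigr)\in SV$.

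The main obstacle is the converse bookkeeping. One must pass from information at the quantile points $Q(p)$ to all $x$, handling jumps of $H$ via the freedom in $p$ and flat pieces via the linearity of $A$. One must also make sure that the one-sided nature of the supremum in \eqref{I-def} is compensated by the trivial bound $A(x)\ge xH(x)$.
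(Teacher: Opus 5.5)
Your forward direction is the paper's argument in slightly different packaging. The paper writes $R_t(u)=tH(x)\bigl(A(x)/(xH(x))-1\bigr)+(tH(x)-u)$ and uses Karamata's theorem together with $H(Q(p))/p\to1$, which is your $p\sim H(x)$ sandwich.

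The converse, however, takes a genuinely different route. The paper never uses the Karamata ratio there. It observes that $R_t(a)\ge (b-a)\,Q(b/t)/Q(a/t)$, so $\II_H(t;A)\to0$ forces $Q(a/t)/Q(b/t)\to\infty$ for fixed $a<b$. It then assumes a failure of slow variation, $H(\lambda x_k)\le(1-\varepsilon)H(x_k)$, and takes $t_k=1/H(x_k)$ and $1-\varepsilon<a<b<1$. This gives $Q(a/t_k)\le\lambda x_k$ and $Q(b/t_k)\ge x_k$, a contradiction in a few lines, with no case analysis on jumps or flat pieces and no representation theorem.

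Your route uses the single level $u=1$. You upgrade $A(Q(p))/(pQ(p))\to1$ to $A(x)/(xH(x))\to1$ for all $x$, handling range points of $Q$ directly and flat stretches by linearity of $A$. You then run the converse of Karamata's theorem via $A(x)=A(x_0)\exp\int_{x_0}^x(1+o(1))\,dy/y$. This costs more bookkeeping but yields more. It shows that $\lim_t\II_H(t;A)=0$ is literally equivalent to the Karamata ratio condition $A(x)\sim xH(x)$, which makes the theorem transparent as a reformulation of the classical characterization of slowly varying tails.

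Two small corrections. First, your concern about one-sidedness is unnecessary. $R_t(u)\ge0$ identically since $Q\ge0$; equivalently $A(Q(p))\ge pQ(p)$, because $H>p$ on $[0,Q(p))$. So $R_t(1)\to0$ gives $A(Q(p))/(pQ(p))\to1$ outright.

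Second, the jump paragraph combines the wrong pair of inequalities. The bounds $A(x)\le xH(x-)(1+o(1))$ and $A(x)\ge xH(x)$ only give $H(x)\le H(x-)(1+o(1))$, which carries no information. The correct pairing is the upper bound $A(x)\le xH(x)(1+o(1))$ from the choice $p=H(x)$, together with the trivial bound $A(x)\ge xH(x-)$. That paragraph is redundant anyway. Every jump point satisfies $Q(H(x))=x$, so it is already covered by your range-of-$Q$ case, and nothing afterwards uses $H(x-)/H(x)\to1$. With that paragraph deleted, your converse is complete.
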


\begin{proof}[Proof of Theorem~\ref{SV_from_I}]
Suppose first that $H\in SV$. By Karamata's
integral theorem, see for example~\cite{BGT1987,BuldyginPavlenkov2013},
\begin{equation}\label{eq:Karamata-integrated}
\frac{1}{xH(x)}
\int_0^xH(y)\,dy
\longrightarrow1
\qquad (x\to\infty).
\end{equation}

Since $H$ is positive and tends to zero at infinity,
we have $Q(p)\to\infty$ as $p\downarrow0$.
By the definition of $Q$ the monotonicity and right-continuity
of $H$, for every fixed $\lambda\in(0,1)$ and all
sufficiently small $p>0$,
\[
H(Q(p))\le p<H(\lambda Q(p)).
\]
Consequently, slow variation gives
\[
1\ge\frac{H(Q(p))}{p}
>\frac{H(Q(p))}{H(\lambda Q(p))}
\longrightarrow1,
\]
and hence
\begin{equation}\label{eq:quantile-tail-equivalence}
\frac{H(Q(p))}{p}\longrightarrow1
\qquad (p\downarrow0).
\end{equation}

Now fix $A>1$. For $t>A/H(0)$ and
$u\in[A^{-1},A]$, put $x=Q(u/t)>0$.
By~\eqref{I-def_tail},
\begin{equation}\label{rturepresent}
\begin{aligned}
R_t(u)
&= \frac{t}{Q(u/t)}
   \int_0^{Q(u/t)} H(y)\,dy-u \\
&= tH(x)
\left(
\frac{\int_0^x H(y)\,dy}{xH(x)}-1
\right)
+(tH(x)-u).
\end{aligned}
\end{equation}
We aimed to show that both terms in the right hand side of~\eqref{rturepresent} converge
to zero uniformly
for $u\in[A^{-1},A].$ For the first term, 
since $Q$ is nonincreasing,
\[
x=Q(u/t)\ge Q(A/t)\longrightarrow\infty, \quad (t \to \infty),
\]
uniformly for $u\in[A^{-1},A]$.
Moreover, $0\le tH(x)\le u\le A$.
Hence, by \eqref{eq:Karamata-integrated},
\[
\sup_{A^{-1}\le u\le A}
\left|
tH(x)
\left(
\frac{\int_0^x H(y)\,dy}{xH(x)}-1
\right)
\right|
\le
A\sup_{x\ge Q(A/t)}
\left|
\frac{\int_0^x H(y)\,dy}{xH(x)}-1
\right|
\longrightarrow0, \quad (t \to \infty).
\]
Similarly, for the second term, by \eqref{eq:quantile-tail-equivalence},
\[
\begin{aligned}
\sup_{A^{-1}\le u\le A}|tH(x)-u|
&=\sup_{A^{-1}\le u\le A}
u\left|\frac{H(Q(u/t))}{u/t}-1\right|\\
&\le A\sup_{0<p\le A/t}
\left|\frac{H(Q(p))}{p}-1\right|
\longrightarrow0.
\end{aligned}
\]

Therefore, both terms on the right-hand side
of~\eqref{rturepresent} converge uniformly to zero
for $u\in[A^{-1},A]$. Consequently,
\[
\lim_{t\to\infty}\II_H(t;A)=0,
\]
which completes the proof of the forward implication.

For the converse implication, suppose that
for every $A>1$
\begin{equation}\label{Itozero}
  \lim_{t\to\infty}\II_H(t;A)=0.
\end{equation}
For $0<a<b<tH(0)$, monotonicity of $Q$
implies
\[
R_t(a)=
\frac{t}{Q(a/t)}
\int_{a/t}^{1}Q(v)\,dv
\ge
\frac{t}{Q(a/t)}
\int_{a/t}^{b/t}Q(v)\,dv
\ge
(b-a)\frac{Q(b/t)}{Q(a/t)}.
\]
Consequently, form~\eqref{Itozero},
\begin{equation}\label{eq:quantile-separation}
\frac{Q(a/t)}{Q(b/t)}
\longrightarrow\infty
\qquad(t\to\infty)
\end{equation}
for every fixed $0<a<b<\infty$.

Suppose now a contradiction, that
$H\notin SV$. Then, since $H$ is nonincreasing,
there exist $\lambda>1$, $\varepsilon>0$
and $x_k\to\infty$ such that
\begin{equation}\label{Hxkcontradict}
  \frac{H(\lambda x_k)}{H(x_k)}
\le(1-\varepsilon), \quad k \in \N
\end{equation}
For $k \in N$ set $t_k=1/H(x_k)$ and choose fixed constants
$a$ and $b$ such that
\[
1-\varepsilon<a<b<1.
\]
Then, by~\eqref{Hxkcontradict},
\[
H(\lambda x_k)\le \frac{1-\varepsilon}{t_k}<\frac{a}{t_k},
\qquad
H(x_k)=\frac{1}{t_k}>\frac{b}{t_k}, \qquad k \in \N.
\]
Therefore, directly from the definition of $Q$, see~\eqref{defQ(p)},
\[
Q(a/t_k)\le\lambda x_k,
\qquad
Q(b/t_k)\ge x_k.
\]
Hence
\[
\frac{Q(a/t_k)}{Q(b/t_k)}
\le\lambda,
\]
contradicting \eqref{eq:quantile-separation}.
Thus $H\in SV$.
\end{proof}

\begin{remark}
Inspection of the proof of Theorem~\ref{SV_from_I} shows that the
uniform convergence condition in~\eqref{ITSSV-SV} can be replaced by
$R_t(u_0)\to 0$ as $t\to\infty$, for some fixed $u_0>0.$ Equivalently,
$$H\in SV
\quad\Longleftrightarrow\quad
\lim_{t\to\infty}\II_H(t;A)=0
\qquad\text{for some }A>1.$$
Nevertheless, we retain the uniform formulation because
it plays a direct probabilistic role in further Theorem~\ref{thm_subseq-Darling}.
\end{remark}


\subsection{$\II$-subsequential slow variation of distributional tails}

Theorem~\ref{SV_from_I} provides necessary and sufficient conditions for $H\in SV$ in terms of the limiting behaviour of the functional $\II$. This naturally suggests a generalization of slowly varying distributional tails obtained by replacing $\,\,\lim\,\,$ with $\,\,\liminf\,\,$ in~\eqref{ITSSV-SV}.

\begin{definition}\label{def_ISSV}
Let $H$ be the tail
distribution function of a
nonnegative random variable with unbounded
support. We say that $H$ is \emph{$\II$-subsequentially slowly varying}, and
write $H\in\ISSV$, if
for any $A>1,$
$$\liminf_{t\to\infty}\II_H(t;A)=0.$$
\end{definition}
\begin{remark}
The notion of $\,\II$-subsequential slow variation is defined
specifically for distributional tails and therefore does not
provide a generalization of the entire class $SV$ of slowly
varying functions, whose elements are arbitrary positive
measurable functions. Rather, $\ISSV$ extends the subclass
of slowly varying distributional tails.
\end{remark}
\begin{remark}
Equivalently to Definition~\ref{def_ISSV}, because $\II_H \ge 0,$  $H\in\ISSV$ if and only if, for every $A>1$,
there exists a sequence $t_k=t_k(A)\to\infty$ such that
\[
\lim_{k\to\infty}\II_H(t_k;A)=0.
\]
\end{remark}

Although the sequence in the above remark may depend on $A$,
we next show that the lower-limit condition defining $\ISSV$
can be realized along a single deterministic sequence of
integer, independent of $A$.

\begin{lemma}\label{lem_common-subsequence}
The condition $H\in\ISSV$ is equivalent to the existence of a strictly increasing sequence of integers $n_k\to\infty$, independent of $A$, such that
\begin{equation}\label{common-subsequence}
\lim_{k \to \infty}\II_H(n_k;A)=0,
\qquad\text{for every }A>1.
\end{equation}
\end{lemma}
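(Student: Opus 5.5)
The converse direction is immediate. If \eqref{common-subsequence} holds along integers $n_k\to\infty$, then for each $A>1$ we have $\liminf_{t\to\infty}\II_H(t;A)\le\lim_k\II_H(n_k;A)=0$. Since $\II_H\ge0$, the lower limit equals $0$, and so $H\in\ISSV$. The work is in the forward direction, and I plan to handle its two difficulties separately: the dependence of the sequence on $A$, and the passage from real $t$ to integers $n$.

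\emph{Monotonicity in $A$.} By \eqref{I-def}, $\II_H(t;A)$ is a supremum over the interval $[A^{-1},A]$, which grows with $A$. Hence $\II_H(t;A)\le\II_H(t;A')$ whenever $1<A\le A'$ and both sides are defined. A sequence that works for a large $A'$ therefore works for every smaller $A$, which will allow a diagonal construction.

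\emph{Rounding to integers.} For $t\ge1$ put $n=\lceil t\rceil$, so that $t\le n\le t+1\le 2t$. Lemma~\ref{Rt-properties-lemma}(i) gives $R_n(u)=\frac{n}{t}R_t(tu/n)$. For $u\in[A^{-1},A]$ the rescaled point $tu/n$ lies in $[u/2,u]\subset[(2A)^{-1},2A]$. It follows that
\[
\II_H(\lceil t\rceil;A)\le 2\,\II_H(t;2A),
\qquad t>\max\{1,\,2A/H(0)\}.
\]
For this I need to check that all the quantities are defined. The condition $t>2A/H(0)$ makes $\II_H(t;2A)$ defined. Then $n\ge t>A/H(0)$, so $\II_H(n;A)$ is defined as well, and every $R_t$ that appears is evaluated at a point of $[(2A)^{-1},2A]$. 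This domain and constant bookkeeping is the only delicate step, and the crude factor $2$ is sufficient.

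\emph{Diagonal construction.} Assume $H\in\ISSV$. I proceed inductively on $m\in\N$. Apply Definition~\ref{def_ISSV} with $A_m=2(m+1)$ to choose a real
\[
t_m>\max\{t_{m-1}+1,\;m,\;A_m/H(0)\}
\quad\text{such that}\quad \II_H(t_m;A_m)<1/m .
\]
Set $n_m=\lceil t_m\rceil$. Because $t_m>t_{m-1}+1$, the integers $n_m$ are strictly increasing and tend to infinity. Now fix $A>1$. For every $m\ge A$, monotonicity in $A$ followed by the rounding bound with $A$ replaced by $m+1$ gives
\[
\II_H(n_m;A)\le\II_H(n_m;m+1)\le 2\,\II_H(t_m;2(m+1))<2/m .
\]
Letting $m\to\infty$ proves \eqref{common-subsequence}.
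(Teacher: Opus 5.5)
Your proposal is correct and follows essentially the same route as the paper: a diagonal choice of real times $t_k$ with $\II_H(t_k;A_k)$ small for growing $A_k$, monotonicity of $\II_H$ in $A$, and the rescaling identity of Lemma~\ref{Rt-properties-lemma}(i) to pass to integers. The only difference is cosmetic. You round up ($n=\lceil t\rceil\ge t$, so the domain conditions are automatic) and build a fixed factor $2$ of slack into $A_m=2(m+1)$. The paper instead rounds down ($n_k=\lfloor t_k\rfloor$) and uses $t_k/n_k\to1$ to place the rescaled interval inside $[k^{-1},k]$ for large $k$.
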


\begin{proof}[Proof of Lemma~\ref{lem_common-subsequence}]
Only the forward implication requires proof. Suppose $H\in \ISSV$. 
By the Definition~\ref{def_ISSV} of $\ISSV$, for every
integer $k\geq 2$,
\[
\liminf_{t\to\infty}\II_H(t;k)=0.
\]
Since $\II_H$ is nonnegative, for any $T>0$ there exists
$t>T$ such that $\II_H(t;k)<1/k$. Thus, proceeding
inductively, we may choose a sequence $t_k\to\infty$
satisfying
\[
t_k>t_{k-1}+2,
\qquad
\II_H(t_k;k)<\frac{1}{k}, \qquad k\geq 2.
\]
Moreover, since $\II_H(t;A)$ is nondecreasing in $A$,
for every fixed $A>1$ and all sufficiently large $k$,
\[
0\leq\II_H(t_k;A)
\leq\II_H(t_k;k)<\frac1k.
\]
Consequently, the same sequence $(t_k)$ satisfies
\[
\lim_{k \to \infty}\II_H(t_k;A)=0
\qquad\text{for every }A>1.
\]

We now show that the real sequence $(t_k)$ can be replaced
by a strictly increasing sequence of integers. Set
$n_k=\lfloor t_k\rfloor$. Since $t_k>t_{k-1}+2$,
the sequence $(n_k)$ is strictly increasing and
$n_k\to\infty$. Furthermore,
\[
0\leq t_k-n_k<1,
\qquad
\frac{n_k}{t_k}\longrightarrow1.
\]

Fix $A>1$. By the rescaling identity~\eqref{eq:Rt-rescaling},
for every $k\in\mathbb N$ and $u\in[A^{-1},A]$, we have
\begin{equation}\label{rnkuresk}
  R_{n_k}(u)
=
\frac{n_k}{t_k}
R_{t_k}\left(\frac{t_ku}{n_k}\right).
\end{equation}

Since $t_k/n_k\to1$, for all sufficiently large $k$,
\[
\left\{
\frac{t_ku}{n_k}:
A^{-1}\leq u\leq A
\right\}
=
\left[\frac{t_k}{A n_k},\frac{A t_k}{n_k}\right]
\subseteq[k^{-1},k].
\]

Therefore, by monotonicity of $\II_H,$ Definition~\ref{integrated_funct} and equality~\eqref{rnkuresk}, we obtain
\[
0\leq\II_H(n_k;A)
\leq
\frac{n_k}{t_k}\II_H(t_k;k)
<
\frac{n_k}{k t_k}
\leq\frac1k.
\]
So,
\[
\lim_{k\to\infty}\II_H(n_k;A)=0
\qquad\text{for every }A>1.
\]
The converse implication follows immediately from
the definition of $\ISSV$.
\end{proof}
\begin{example}\label{ex_ISSV_not_SV}
The inclusion $SV\subset\ISSV$ is strict.

Let
\[
a_k=\exp(\exp(k^2)),\qquad k\ge1,
\]
and define
\[
N(x)=\sum_{k\ge1}\ind_{\{x\ge a_k\}},
\qquad
H(x)=\frac{2^{-N(x)}}{1+\log(1+x)},
\quad x\ge0.
\]
Then $H$ is the tail distribution function of a
nonnegative random variable with unbounded support,
and
\[
H\in\ISSV\setminus SV.
\]
\end{example}

\begin{proof}
First, for all sufficiently large $k$,
\[
\frac{H(a_k)}{H(a_k/2)}
=\frac12\,
\frac{1+\log(1+a_k/2)}
     {1+\log(1+a_k)}
\longrightarrow\frac12.
\]
Therefore $H\notin SV$.

To prove that $H\in\ISSV$, put
\[
L(x)=1+\log(1+x),\qquad
x_k=a_k^k,\qquad
t_k=\frac1{H(x_k)}=2^kL(x_k).
\]
Fix $A>1$ and write
\[
q_k(u)=Q(u/t_k),\qquad A^{-1}\le u\le A.
\]
Since
\[
\frac{L(x_k)}{L(a_k)}\longrightarrow\infty,
\qquad
\frac{L(x_k)}{L(a_{k+1})}\longrightarrow0,
\]
we have $a_k<q_k(u)<a_{k+1}$ for all sufficiently
large $k$, uniformly in $u\in[A^{-1},A]$.
Consequently,
\[
L(q_k(u))=\frac{L(x_k)}{u},
\qquad
t_kH(q_k(u))=u.
\]

Put $b_k=q_k(A)$. Then
\[
\frac{\log b_k}{\log a_k}\sim\frac{k}{A}
\longrightarrow\infty.
\]
In particular,
\[
\frac{2^k a_k L(b_k)}{b_k}\longrightarrow0.
\]

Now let $\ell(x)=1/L(x)$. For
$q\in[a_k,a_{k+1})$, we have
\[
0\le
\int_0^q H(y)\,dy
-2^{-k}\int_0^q\ell(y)\,dy
\le a_k.
\]
Since $\ell\in SV$, Karamata's theorem gives
\[
\frac{\int_0^q\ell(y)\,dy}{q\ell(q)}
\longrightarrow1
\qquad(q\to\infty).
\]
It follows that
\[
\sup_{A^{-1}\le u\le A}
\left|
\frac{\int_0^{q_k(u)}H(y)\,dy}
     {q_k(u)H(q_k(u))}-1
\right|
\longrightarrow0.
\]

Finally, by the definition of $R_t$,
\[
R_{t_k}(u)
=u\left(
\frac{\int_0^{q_k(u)}H(y)\,dy}
     {q_k(u)H(q_k(u))}-1
\right).
\]
Hence
\[
\II_H(t_k;A)
=\sup_{A^{-1}\le u\le A}R_{t_k}(u)
\longrightarrow0
\]
for every $A>1$. Thus $H\in\ISSV$.
\end{proof}

\subsection{Subsequential maximum domination for distributions with $\ISSV$ tails}

We now investigate the probabilistic significance of the ISSV
condition in connection with Darling's classical theorem on
the asymptotic dominance of the maximum over the sum of
independent nonnegative random variables.
Our approach builds on the classical ideas of
Darling~\cite{Darling1952} and their development by
Maller and Resnick~\cite{MallerResnick1984},
adapting them to the subsequential setting.

To this end, let
\[
S_n=\sum_{i=1}^n Y_i,
\qquad
M_n=\max_{1\le i\le n}Y_i,
\]
with the convention that $M_n/S_n=0$ on $\{S_n=0\}$.

To the best of our knowledge, a necessary-and-sufficient
subsequential analogue of Darling's maximum--sum theorem
has not previously been established for general
distributional tails. The following theorem provides
such a characterization along arbitrary deterministic
subsequences, without assuming regular variation
or continuity of the tail distribution.


\begin{theorem}\label{thm_subseq-Darling}
Let $H$ be the tail
distribution function of a
nonnegative random variable $Y$ with unbounded
support. Then
for every deterministic sequence of
integers $n_k\to\infty$, the following
are equivalent:

\emph{(i)}
$$\II_H(n_k;A)\to 0 \qquad \text{for every} \qquad A>1;$$

\emph{(ii)}
$$
\frac{M_{n_k}}{S_{n_k}}
\pto 1.
$$
\end{theorem}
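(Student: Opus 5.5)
Write $n=n_k$ and let $M_n^{(2)}$ denote the second largest of $Y_1,\dots,Y_n$. Set $S_n'=S_n-M_n$, the sum with one copy of the maximum removed. Condition (ii) is equivalent to $S_n'/M_n\pto0$. The plan is to analyse this ratio by conditioning on the order statistics through the quantile representation. Let $U_1,\dots,U_n$ be i.i.d.\ uniform and $Y_i=Q(U_i)$. Let $U_{(1)}$ be the smallest uniform order statistic, so $M_n=Q(U_{(1)})$. Since $nU_{(1)}$ converges in law to $\mathrm{Exp}(1)$, for every $\eta>0$ we can pick $A>1$ with $\PP(nU_{(1)}\in[A^{-1},A])>1-\eta$. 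Conditionally on $U_{(1)}=u/n$, the remaining $n-1$ uniforms are i.i.d.\ uniform on $(u/n,1)$. Hence
\[
\EE\bigl[S_n'\mid nU_{(1)}=u\bigr]=\frac{n-1}{1-u/n}\int_{u/n}^1Q(v)\,dv ,
\]
and therefore
\[
\EE\Bigl[\frac{S_n'}{M_n}\,\Big|\,nU_{(1)}=u\Bigr]=\frac{n-1}{n-u}\,R_n(u)\approx R_n(u).
\]

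\textbf{(i)$\Rightarrow$(ii).} For $u\in[A^{-1},A]$ the conditional mean of $S_n'/M_n$ is at most $(1+o(1))\,\II_H(n;A)\to0$. A conditional Markov inequality then gives $\PP(S_n'/M_n>\varepsilon)\le\eta+\varepsilon^{-1}(1+o(1))\II_H(n_k;A)$. Letting $k\to\infty$ and then $\eta\to0$ yields (ii). Ties in the maximum need no separate treatment, since $S_n'$ already contains them.

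\textbf{(ii)$\Rightarrow$(i).} This direction is the main obstacle, because convergence in probability does not directly control conditional means. Suppose (i) fails. Then there are $A$, $\delta>0$ and a further subsequence with $R_n(u_n)\ge\delta$ for some $u_n\in[A^{-1},A]$. By Lemma~\ref{Rt-properties-lemma}(ii) we get $R_n(u)\ge\delta/2$ for all $u\in[u_n,u_n+\delta/2]$, an interval carrying probability at least some $c(A,\delta)>0$ under the law of $nU_{(1)}$. On this event we need a lower bound in probability for $S_n'/M_n$, not just for its mean.

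For that lower bound, use a truncation and second-moment argument. Conditionally, $S_n'$ is a sum of $n-1$ i.i.d.\ variables bounded by $M_n=Q(u/n)$. Write $Z=S_n'/M_n=\sum_{j} W_j$ with $W_j\in[0,1]$ i.i.d.\ and $\EE Z=\mu\ge\delta/2\cdot(1+o(1))$. Then $\operatorname{Var}Z\le\sum_j\EE W_j^2\le\sum_j\EE W_j=\mu$. If $\mu$ is large, Chebyshev gives $Z\ge\mu/2$ with high probability. If $\mu\in[\delta/2,K]$ is bounded, apply Paley--Zygmund: $\PP(Z>\mu/2)\ge \tfrac14\mu^2/\EE Z^2\ge\tfrac14\mu^2/(\mu+\mu^2)\ge c'(\delta)>0$. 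Either way $\PP(S_n'/M_n>\delta/4)\ge c\,c'>0$ along the subsequence, contradicting (ii). The care needed here is twofold: make the conditioning on $U_{(1)}$ rigorous, and handle the factor $(n-1)/(n-u)\to1$ uniformly on $[A^{-1},A]$. Both are routine once the conditional i.i.d.\ structure is in place.
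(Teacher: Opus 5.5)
Your proposal is correct and follows essentially the same route as the paper. You use the quantile representation $Y_i=Q(U_i)$ and condition on $V_n=nU_{(1)}$ to get $\EE[(S_n-M_n)/M_n\mid V_n=u]=\frac{n-1}{n-u}R_n(u)$. For the converse you use Lemma~\ref{Rt-properties-lemma}(ii), a uniform lower bound on the density of $V_n$, and the second-moment bound $\EE[Z^2\mid V_n=u]\le\mu^2+\mu$ from the conditionally i.i.d.\ $[0,1]$-valued summands. The only difference is cosmetic. You lower-bound the conditional probability $\PP(Z>\mu/2\mid V_n=u)\ge\mu/(4(1+\mu))$ via Paley--Zygmund, which already makes your split into large and bounded $\mu$ unnecessary. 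The paper instead uses Cauchy--Schwarz to get $\EE[W_n\mid V_n=u]\ge m_n(u)/(m_n(u)+2)$ for the bounded variable $W_n=Z_n/(1+Z_n)$, and then works with $\EE W_{n_k}$.
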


\begin{proof}[Proof of Theorem~\ref{thm_subseq-Darling}]
Let $U_1,\ldots,U_n$ be independent uniform
random variables on $(0,1)$, with order statistics
$U_{(1)}<\cdots<U_{(n)}$.
Put $V_n=nU_{(1)}$. Then $V_n$ has density
\begin{equation}\label{dencity_Vn}
f_n(u)=\left(1-\frac un\right)^{n-1}
\ind_{(0,n)}(u),
\end{equation}
and $V_n\dto E$, where $E\sim\operatorname{Exp}(1)$.

From \eqref{defQ(p)}, using the quantile representation, we may take
$Y_i=Q(U_i)$, so that $M_n=Q(V_n/n)$.
On $\{M_n>0\}$, define
\[
Z_n=\frac{S_n-M_n}{M_n},
\]
and set $Z_n=0$ otherwise.
Conditionally on $V_n=u$, the random variables $U_{(2)},\ldots,U_{(n)}$ are distributed as the order statistics of $n-1$ independent uniform random variables on $(u/n,1)$. Consequently, for $n\ge2$, a direct calculation gives
\begin{equation}\label{conditional-mean}
m_n(u):=\EE[Z_n\mid V_n=u]
=\frac{n-1}{n-u}R_n(u),
\qquad 0<u<nH(0).
\end{equation}

Put $W_n=1-M_n/S_n$, with our convention
on zero denominators. Since $0\le W_n\le1$,
\[
\frac{M_{n_k}}{S_{n_k}}\pto1
\quad\Longleftrightarrow\quad
\EE W_{n_k}\to0.
\]

Suppose first that $\II_H(n_k;A)\to0$
for every $A>1$. Fix $A>1$ and put
$I_A=[A^{-1},A]$.
Since $W_n=Z_n/(1+Z_n)\le Z_n$
on $\{M_n>0\}$, conditioning on $V_{n_k}$
and using \eqref{conditional-mean} gives
\[
\EE W_{n_k}
\le
\sup_{u\in I_A}m_{n_k}(u)
+\PP(V_{n_k}\notin I_A)
\le
\frac{n_k-1}{n_k-A}\II_H(n_k;A)
+\PP(V_{n_k}\notin I_A).
\]
Hence
\[
\limsup_{k\to\infty}\EE W_{n_k}
\le 1-e^{-1/A}+e^{-A}.
\]
Letting $A\to\infty$ proves
$\EE W_{n_k}\to0$, and thus (ii).

Conversely, suppose that (ii) holds,
so $\EE W_{n_k}\to0$.
Conditionally on $V_n=u$, the variable $Z_n$
has the same distribution as a sum of $n-1$
independent random variables taking values
in $[0,1]$.
Therefore,
\[
\EE[Z_n^2\mid V_n=u]
\le m_n(u)^2+m_n(u).
\]
By conditional analogue of Cauchy--Schwarz inequality,
\[
\begin{aligned}
m_n(u)^2
&\le
\EE\left[\frac{Z_n}{1+Z_n}\mid V_n=u\right]
\EE[Z_n(1+Z_n)\mid V_n=u]\\
&\le
\EE[W_n\mid V_n=u]\,
\bigl(m_n(u)^2+2m_n(u)\bigr).
\end{aligned}
\]
Consequently,
\begin{equation}\label{conditional-W-bound}
\EE[W_n\mid V_n=u]
\ge\frac{m_n(u)}{m_n(u)+2},
\qquad 0<u<nH(0).
\end{equation}

Suppose, for a contradiction, that (i) fails.
Then, along a further subsequence,
there exist $A>1$, $\delta>0$, and
$u_k\in[A^{-1},A]$ such that
\[
R_{n_k}(u_k)\ge\delta.
\]
By Lemma~\ref{Rt-properties-lemma}(ii),
\[
R_{n_k}(v)\ge\frac{\delta}{2},
\qquad
v\in J_k:=
\left[u_k,u_k+\frac{\delta}{2}\right].
\]
Since the intervals $J_k$ lie in the fixed
compact set
\[
K=[A^{-1},A+\delta/2]\subset(0,\infty),
\]
equality~\eqref{conditional-mean} gives,
for all sufficiently large $k$,
\[
m_{n_k}(v)\ge\frac{\delta}{4},
\qquad v\in J_k.
\]
Moreover, the densities $f_{n_k}$ converge
uniformly to $e^{-v}$ on $K$.
Thus $f_{n_k}(v)\ge c>0$ on $J_k$
for all sufficiently large $k$.

Applying \eqref{conditional-W-bound}, we obtain
\[
\EE W_{n_k}
\ge\int_{J_k}
\frac{m_{n_k}(v)}{m_{n_k}(v)+2}
f_{n_k}(v)\,dv
\ge\frac{c\delta^2}{2(8+\delta)}>0,
\]
contradicting $\EE W_{n_k}\to0$.
Therefore (i) holds.
\end{proof}

\begin{corollary}\label{Darling_corol}
Let $H$ be the tail distribution of a nonnegative
random variable $Y$ with unbounded support. Then
\[
H\in\ISSV
\quad\Longleftrightarrow\quad
\text{there exists }n_k\to\infty
\text{ such that }
\frac{M_{n_k}}{S_{n_k}}
\xrightarrow{\mathbb P}1.
\]
\end{corollary}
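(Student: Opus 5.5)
The corollary follows by combining Lemma~\ref{lem_common-subsequence} with Theorem~\ref{thm_subseq-Darling}. The only care needed is to match the quantifiers: the sequence must be independent of $A$, must consist of integers, and the statement asks merely for existence of some $n_k\to\infty$.

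For the forward implication, assume $H\in\ISSV$. Lemma~\ref{lem_common-subsequence} yields a strictly increasing integer sequence $n_k\to\infty$, not depending on $A$, such that $\II_H(n_k;A)\to0$ for every $A>1$. This is exactly condition (i) of Theorem~\ref{thm_subseq-Darling} for this deterministic sequence. The theorem then gives (ii), namely $M_{n_k}/S_{n_k}\pto1$.

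For the converse, suppose some deterministic integer sequence $n_k\to\infty$ satisfies $M_{n_k}/S_{n_k}\pto1$. The sequence need not be monotone, but Theorem~\ref{thm_subseq-Darling} is stated for every deterministic sequence of integers tending to infinity, so it applies directly. The implication (ii)$\Rightarrow$(i) gives $\II_H(n_k;A)\to0$ for every $A>1$. Since $n_k\to\infty$ and $\II_H\ge0$, this gives
\[
\liminf_{t\to\infty}\II_H(t;A)\le\lim_{k\to\infty}\II_H(n_k;A)=0
\]
for every $A>1$. Hence $H\in\ISSV$ by Definition~\ref{def_ISSV}.

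There is no substantial obstacle; the analytic work was done in Theorem~\ref{thm_subseq-Darling}. The one point to check is the interpretation of ``there exists $n_k\to\infty$'' in the corollary. It should mean a deterministic integer sequence, as in the theorem, and then the argument above is complete. If real sequences $t_k$ were intended, one would first pass to $\lfloor t_k\rfloor$, but the probabilistic statement only makes sense for integer indices.
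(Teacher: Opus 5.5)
Your proof is correct and is the paper's argument: the paper's proof just cites Lemma~\ref{lem_common-subsequence} and Theorem~\ref{thm_subseq-Darling}. You also spell out the quantifier matching the paper leaves implicit: an $A$-independent integer sequence for the forward direction, and $\liminf_{t\to\infty}\II_H(t;A)\le\lim_{k}\II_H(n_k;A)=0$ for the converse.
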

\begin{proof}[Proof of Corollary~\ref{Darling_corol}]
The statement follows immediately from
Theorem~\ref{thm_subseq-Darling}
and Lemma~\ref{lem_common-subsequence}.
\end{proof}

\begin{remark}
Classical approaches to domains of attraction and
partial attraction often rely on characteristic
functions, L\'evy--Khintchine representations,
and Tauberian methods.
The proof of Theorem~\ref{thm_subseq-Darling}
takes a different route, based on conditioning
on the smallest uniform order statistic.
The resulting conditional moment estimates,
combined with the Cauchy--Schwarz inequality,
establish both implications directly.
In particular, the converse implication,
from subsequential maximum domination to
the integrated tail condition, is proved
without appealing to existing attraction theorems.
\end{remark}

\subsection{Randomly weighted sums with \(\ISSV\) tails}

We finally connect subsequential maximum domination
with the convergence of randomly weighted
sums $T_n$.

\begin{lemma}\label{lem_maximum-weight}
Let $Y$ be a nonnegative random variable
with unbounded support, and let $X$ be
an integrable, nondegenerate
random variable. Then, for every
deterministic sequence of integers
$n_k\to\infty$,
\[
\frac{M_{n_k}}{S_{n_k}} \pto 1 
\quad\Longleftrightarrow\quad  T_{n_k} \dto X.
\]
\end{lemma}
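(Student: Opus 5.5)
The plan is to prove the two implications separately. The forward direction is the heuristic from the introduction made quantitative. The converse is a strict-convexity argument.

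\emph{Forward direction.} Let $I_n$ be a maximizing index chosen as a function of $(Y_i)$ alone. On $\{S_n>0\}$ we have $T_n-X_{I_n}=\sum_{i\ne I_n}(Y_i/S_n)(X_i-X_{I_n})$. Conditioning on the weights gives
\[
\EE\bigl[|T_n-X_{I_n}|\,\big|\,Y_1,\dots,Y_n\bigr]\le 2\EE|X|\,\Bigl(1-\frac{M_n}{S_n}\Bigr).
\]
The event $\{S_n=0\}$ has probability $\PP(Y=0)^n\to0$ and needs only a trivial separate treatment. Hence $\EE|T_{n_k}-X_{I_{n_k}}|\le 2\EE|X|\,\EE W_{n_k}+o(1)\to0$, because $0\le W_n\le1$ and $M_{n_k}/S_{n_k}\pto1$. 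Since $X_{I_n}\stackrel{d}{=}X$, it follows that $T_{n_k}\dto X$.

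\emph{Converse direction.} Take $f(x)=\sqrt{1+x^2}$, which is strictly convex with linear growth, so $\EE f(X)<\infty$. The first step is uniform integrability of $(T_n)$. By de la Vallée-Poussin there is a convex increasing $G$ with $G(x)/x\to\infty$ and $\EE G(|X|)<\infty$. Conditionally on the weights, Jensen gives $G(|T_n|)\le\sum_i p_i G(|X_i|)$ with $p_i=Y_i/S_n$, so $\EE G(|T_n|)\le \EE G(|X|)$. The same bound holds for every random convex combination $\sum_i p_iX_i$ with $(p_i)$ independent of the marks, so this whole family $\mathcal K$ is uniformly integrable and tight. Together with $T_{n_k}\dto X$, this yields $\EE f(T_{n_k})\to\EE f(X)$.

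Next I will show that a non-dominating maximum forces a definite Jensen deficit. Suppose that along a further subsequence $\PP(M_{n_k}/S_{n_k}\le1-\varepsilon)\ge\eta>0$. On that event all $p_i\le1-\varepsilon$. Then a greedy grouping of the indices, chosen measurably from the weights, splits them into two blocks with total masses $\lambda,1-\lambda$ satisfying $\lambda\in[\varepsilon/2,1-\varepsilon/2]$. This writes $T_n=\lambda A+(1-\lambda)B$, where, conditionally on the weights, $A$ and $B$ are independent members of $\mathcal K$. The conditional deficit then decomposes as
\[
\EE f(X)-\EE f(T_n)=\bigl[\EE f(X)-\lambda\EE f(A)-(1-\lambda)\EE f(B)\bigr]+\bigl[\lambda\EE f(A)+(1-\lambda)\EE f(B)-\EE f(\lambda A+(1-\lambda)B)\bigr].
\]
Both brackets are nonnegative by Jensen.

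Consider the functional $\Phi(\lambda,\mu_A,\mu_B)$ given by this deficit. It is defined on $[\varepsilon/2,1-\varepsilon/2]\times\overline{\mathcal K}\times\overline{\mathcal K}$, where $\overline{\mathcal K}$ is the weak closure, which is compact by tightness. The functional is continuous there, by uniform integrability and the linear growth of $f$. Suppose $\Phi=0$ at some point. Then the first bracket vanishing, together with $\EE f(A)\le\EE f(X)$ (Jensen, preserved under UI limits), forces $\EE f(A)=\EE f(B)=\EE f(X)$. I then need to conclude $A\stackrel{d}{=}X$, and this is the step I expect to be the main obstacle. I would attack it by showing that, for limits of convex combinations, equality in $\EE f(A)\le\EE f(X)$ forces the weights to concentrate on a single index in the limit, so that $A\stackrel{d}{=}X$; the same holds for $B$. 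Granting this, the vanishing of the second bracket gives equality in Jensen for $\lambda X+(1-\lambda)X'$ with $X,X'$ independent copies of the nondegenerate $X$, which is impossible for strictly convex $f$.

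Therefore $\Phi\ge c(\varepsilon)>0$, and so $\EE f(X)-\EE f(T_{n_k})\ge\eta\, c(\varepsilon)$, which contradicts $\EE f(T_{n_k})\to\EE f(X)$. Thus $M_{n_k}/S_{n_k}\pto1$.
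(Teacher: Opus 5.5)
\paragraph{The converse leaves its key step unproved.} Your forward direction is exactly the paper's argument. The converse rests on $\Phi>0$ at every point of $[\varepsilon/2,1-\varepsilon/2]\times\overline{\mathcal K}\times\overline{\mathcal K}$. You reduce this to the claim that $\EE f(A)=\EE f(X)$ with $\Law(A)\in\overline{\mathcal K}$ forces $A\stackrel{d}{=}X$, and then you simply grant that claim. The claim is true. But proving it your way requires two further ingredients. First, you must identify weak limits of random convex combinations as mixtures of laws of $\sum_j p_jX_j+(1-\sum_j p_j)\EE X$ over ranked partitions $p$; this is essentially Lemma~\ref{lem_ranked-compactness}(ii), whose proof fills Appendix~A. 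Second, you need a strict-Jensen argument for countable sums. As written, the converse is therefore not proved.

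\paragraph{A short repair.} The detour is unnecessary. Suppose $\Phi=0$. Then the second bracket vanishes, and since $f$ is strictly convex and $\lambda\in(0,1)$, this forces $A=B$ almost surely. Because $A$ and $B$ are independent, $\PP(A\le x)=\PP(A\le x)^2$ for every $x$, so $A=B$ is almost surely constant. That constant is $\EE X$: every element of $\mathcal K$ has mean $\EE X$, and uniform integrability carries this to $\overline{\mathcal K}$. The first bracket then equals $\EE f(X)-f(\EE X)>0$ by strict Jensen and nondegeneracy, a contradiction. With this substitution, your compactness argument goes through.

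\paragraph{Comparison with the paper.} Even repaired, your route is much heavier than the paper's. The paper centers, $V_i=X_i-\EE X$, and writes $W_n=\sum_{i,j}p_{n,i}p_{n,j}(V_i+V_j)/2$ on $\{S_n>0\}$. Applying the triangle inequality termwise gives the explicit bound $\kappa\,\EE[(1-Q_n)\ind_{\{S_n>0\}}]\le\PP(S_n>0)\EE|V|-\EE|W_n|$, with $\kappa=\EE|V|-\EE|(V+V')/2|>0$. Uniform integrability gives $\EE|W_{n_k}|\to\EE|V|$, hence $\EE Q_{n_k}\to1$, and $Q_n\le M_n/S_n\le1$ finishes.

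Averaging over all pairs $(i,j)$, instead of splitting into two blocks, makes the Jensen deficit exactly computable in terms of the two-copy gap $\kappa$. This is the same independent-copies mechanism as in the repair above. As a result, the paper needs no compactness, no greedy grouping, and no identification of limit laws. It does not even need strict convexity: $f(x)=|x-\EE X|$ suffices, because $\kappa>0$ follows from $V$ taking both signs.
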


\begin{proof}[Proof of Lemma~\ref{lem_maximum-weight}]
Suppose first that $M_{n_k}/S_{n_k}\pto1$.
Let $I_n$ be the index of the first maximum
among $Y_1,\ldots,Y_n$.
Since $I_n$ depends only on the weights,
$X_{I_n}\stackrel d=X$.
Therefore,
\[
\EE\left[|T_n-X_{I_n}|\mid Y_1,\ldots,Y_n\right]
\le 2\EE|X|\left(1-\frac{M_n}{S_n}\right),
\]
almost surely on $\{S_n>0\}$.
Consequently,
\[
\EE\left[
|T_{n_k}-X_{I_{n_k}}|
\ind_{\{S_{n_k}>0\}}
\right]\longrightarrow0.
\]
Since $\PP(S_{n_k}=0)\to0$, we obtain
$T_{n_k}-X_{I_{n_k}}\pto0$, and hence
$T_{n_k}\dto X$.

Conversely, suppose that $T_{n_k}\dto X$.
Put $\mu=\EE X$, $V_i=X_i-\mu$, and
\[
W_n=T_n-\mu\ind_{\{S_n>0\}},
\qquad n\in\N.
\]
For $n\in\N$ and $1\le i\le n$, define
\[
p_{n,i}=\frac{Y_i}{S_n}\ind_{\{S_n>0\}}.
\]
Then $W_{n_k}\dto V:=X-\mu$.
Since
\[
W_n=\sum_{i=1}^n p_{n,i}V_i,
\qquad
\sum_{i=1}^n p_{n,i}\le1,
\]
Jensen's inequality gives
\[
\EE(|W_n|-K)_+
\le\EE(|V|-K)_+,
\qquad K>0.
\]
Thus $(W_n)$ is uniformly integrable, and
\begin{equation}\label{eq_centered_L1_conv}
\EE|W_{n_k}|\longrightarrow\EE|V|.
\end{equation}

Let $V'$ be an independent copy of $V$ and put
\[
\kappa
=\EE|V|-\EE\left|\frac{V+V'}2\right|
=\EE\left[
\min\{|V|,|V'|\}\ind_{\{VV'<0\}}
\right]>0,
\]
where strict positivity follows from
$\EE V=0$ and the nondegeneracy of $V$.

On $\{S_n>0\}$, we have
$\sum_{i=1}^n p_{n,i}=1$, and therefore
\[
W_n=\sum_{i,j=1}^n p_{n,i}p_{n,j}
\frac{V_i+V_j}{2}.
\]
By the triangle inequality and independence
of the marks, on $\{S_n>0\}$,
\[
\begin{aligned}
&\EE[|W_n|\mid Y_1,\ldots,Y_n]\\
&\quad\le
\sum_{i,j=1}^n p_{n,i}p_{n,j}
\EE\left|\frac{V_i+V_j}{2}\right|\\
&\quad=
\left(\sum_{i=1}^n p_{n,i}^2\right)\EE|V|
+\left(1-\sum_{i=1}^n p_{n,i}^2\right)
\EE\left|\frac{V+V'}2\right|\\
&\quad=
\EE|V|-\kappa
\left(1-\sum_{i=1}^n p_{n,i}^2\right).
\end{aligned}
\]

For $n\in\N$, define
\[
Q_n=\sum_{i=1}^n p_{n,i}^2
=\frac{\sum_{i=1}^n Y_i^2}{S_n^2}
\ind_{\{S_n>0\}}.
\]
Taking expectations in the preceding
inequality, we obtain
\[
\kappa\EE\left[
(1-Q_n)\ind_{\{S_n>0\}}
\right]
\le
\PP(S_n>0)\EE|V|-\EE|W_n|.
\]
By \eqref{eq_centered_L1_conv} and
$\PP(S_n>0)\to1$, the right-hand side
converges to zero along $\{n_k\}$.
Consequently,
\[
\EE Q_{n_k}\longrightarrow1.
\]
Finally, since
\[
Q_n\le\frac{M_n}{S_n}\le1,
\]
we conclude that
$M_{n_k}/S_{n_k}\pto1$,
as required.
\end{proof}

\begin{remark}\label{rem_degen_case}
The forward implication of
Lemma~\ref{lem_maximum-weight} remains valid
for degenerate $X$. In fact, in this case
$T_n(X)\pto X$ along the full sequence,
regardless of the behaviour of $M_n/S_n$.
Thus, the nondegeneracy assumption is essential
only for the converse implication.
\end{remark}

We are now ready to formulate and prove a subsequential analogue of Theorem~\ref{breiman-sv-thm}.

\begin{theorem}\label{thm_subsequent_braimanSV}
Let $Y$ be a nonnegative random variable
with unbounded support and tail distribution
function $H$. The following statements
are equivalent:

\emph{(i)} $H\in \ISSV$;

\emph{(ii)} There exists a deterministic
strictly increasing sequence of integers
$n_k\to\infty$, depending only on $H$,
such that
\[
T_{n_k}(X)\dto X
\]
for every integrable
random variable $X$ independent of $Y$;

\emph{(iii)} There exist a nondegenerate
integrable random variable
$X$ independent of $Y$ and a
deterministic strictly increasing
sequence of integers $n_k\to\infty$
such that
\[
T_{n_k}(X)\dto X.
\]
\end{theorem}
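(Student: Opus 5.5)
The plan is to prove the cycle (i)$\Rightarrow$(ii)$\Rightarrow$(iii)$\Rightarrow$(i). Each implication reduces to results already established: Lemma~\ref{lem_common-subsequence}, Theorem~\ref{thm_subseq-Darling}, Lemma~\ref{lem_maximum-weight} and Remark~\ref{rem_degen_case}.

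For (i)$\Rightarrow$(ii), assume $H\in\ISSV$. Lemma~\ref{lem_common-subsequence} supplies a strictly increasing integer sequence $n_k\to\infty$, constructed from $H$ alone, such that $\II_H(n_k;A)\to0$ for every $A>1$. Theorem~\ref{thm_subseq-Darling}, direction (i)$\Rightarrow$(ii), then gives $M_{n_k}/S_{n_k}\pto1$. This is a statement about the weights only, so the same $(n_k)$ works for every mark. Now let $X$ be any integrable mark independent of the weights.
\begin{itemize}
\item If $X$ is nondegenerate, the forward implication of Lemma~\ref{lem_maximum-weight} gives $T_{n_k}(X)\dto X$.
\item If $X$ is degenerate, Remark~\ref{rem_degen_case} gives the same conclusion. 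Concretely, if $X\equiv c$ then $T_n=c\ind_{\{S_n>0\}}$, and $\PP(S_n=0)=\PP(Y=0)^n\to0$ because $\PP(Y=0)<1$ by unboundedness.
\end{itemize}
Hence (ii) holds with a subsequence depending only on $H$.

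For (ii)$\Rightarrow$(iii), specialize (ii) to any fixed nondegenerate integrable mark, for instance a symmetric Bernoulli variable on $\{0,1\}$ independent of the weights.

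For (iii)$\Rightarrow$(i), suppose $T_{n_k}(X)\dto X$ for some nondegenerate integrable $X$ and some deterministic $n_k\to\infty$. The converse part of Lemma~\ref{lem_maximum-weight}, which uses the nondegeneracy, yields $M_{n_k}/S_{n_k}\pto1$. Theorem~\ref{thm_subseq-Darling}, direction (ii)$\Rightarrow$(i), then gives $\II_H(n_k;A)\to0$ for every $A>1$. In particular $\liminf_{t\to\infty}\II_H(t;A)=0$ for every $A>1$, which is $H\in\ISSV$ by Definition~\ref{def_ISSV}. Equivalently, this step is the reverse direction of Corollary~\ref{Darling_corol}.

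The substantive difficulties are already handled in Theorem~\ref{thm_subseq-Darling} and the converse of Lemma~\ref{lem_maximum-weight}. The only step needing care is in (i)$\Rightarrow$(ii): the subsequence must be uniform over all marks and must cover degenerate marks. Uniformity holds because $(n_k)$ comes from $H$ alone via Lemma~\ref{lem_common-subsequence}, and degenerate marks are covered by Remark~\ref{rem_degen_case}.
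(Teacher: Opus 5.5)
Your proposal is correct and follows essentially the same route as the paper: the cycle (i)$\Rightarrow$(ii)$\Rightarrow$(iii)$\Rightarrow$(i) via Lemma~\ref{lem_common-subsequence}, Theorem~\ref{thm_subseq-Darling}, and both directions of Lemma~\ref{lem_maximum-weight}, with degenerate marks covered by Remark~\ref{rem_degen_case}. Your explicit check for a constant mark $c$, namely $T_n=c\ind_{\{S_n>0\}}$ with $\PP(S_n=0)=\PP(Y=0)^n\to0$, is a harmless addition that the paper leaves to the remark.
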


\begin{proof}[Proof of Theorem~\ref{thm_subsequent_braimanSV}]
Suppose first that $H\in\ISSV$.
By Lemma~\ref{lem_common-subsequence}
and Theorem~\ref{thm_subseq-Darling},
there exists a deterministic strictly
increasing sequence of integers
$n_k\to\infty$ such that
\[
\frac{M_{n_k}}{S_{n_k}}\pto1.
\]
The forward implication of
Lemma~\ref{lem_maximum-weight}
then gives (ii) for every integrable
$X$, including the degenerate case (Remark~\ref{rem_degen_case}).

The implication (ii)$\Rightarrow$(iii)
is immediate.

Now suppose that (iii) holds.
Since $X$ is nondegenerate,
Lemma~\ref{lem_maximum-weight} gives
\[
\frac{M_{n_k}}{S_{n_k}}\pto1.
\]
By Theorem~\ref{thm_subseq-Darling},
this implies $H\in \ISSV$,
establishing (i).
\end{proof}

\begin{remark}
Condition (ii) of Theorem~\ref{thm_subsequent_braimanSV}
does not require $X$ to be nondegenerate;
integrability alone is sufficient.
\end{remark}

\begin{corollary}\label{cor_breimanSV}
Let $Y$ be a nonnegative random variable
with unbounded support and tail distribution
function $H$. The following statements
are equivalent:

\emph{(i)} $H\in SV$;

\emph{(ii)} For every integrable random
variable $X$ independent of $Y$,
\[
T_n(X)\dto X
\qquad\text{as }n\to\infty;
\]

\emph{(iii)} There exists a nondegenerate
integrable random variable $X$
independent of $Y$ such that
\[
T_n(X)\dto X
\qquad\text{as }n\to\infty.
\]
\end{corollary}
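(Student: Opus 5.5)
The plan is to deduce Corollary~\ref{cor_breimanSV} from the full-sequence versions of the tools already developed: Theorem~\ref{SV_from_I}, the full-sequence case of Theorem~\ref{thm_subseq-Darling}, and Lemma~\ref{lem_maximum-weight}. The key observation is that Theorem~\ref{thm_subseq-Darling} and Lemma~\ref{lem_maximum-weight} hold for \emph{every} deterministic sequence $n_k\to\infty$, in particular for $n_k=k$. Hence no subsequence selection (Lemma~\ref{lem_common-subsequence}) is needed here.

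For (i)$\Rightarrow$(ii), assume $H\in SV$. By Theorem~\ref{SV_from_I}, $\II_H(t;A)\to0$ as $t\to\infty$ for every $A>1$, and in particular along the integers $t=n$. Applying Theorem~\ref{thm_subseq-Darling} with $n_k=k$ gives $M_n/S_n\pto1$. The forward implication of Lemma~\ref{lem_maximum-weight}, which by Remark~\ref{rem_degen_case} also covers degenerate $X$, then gives $T_n(X)\dto X$ for every integrable $X$. The implication (ii)$\Rightarrow$(iii) is trivial: take any nondegenerate integrable $X$, for instance a Bernoulli variable.

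For (iii)$\Rightarrow$(i), apply the converse direction of Lemma~\ref{lem_maximum-weight} with $n_k=k$ and the given nondegenerate $X$ to obtain $M_n/S_n\pto1$ along the full sequence. The implication (ii)$\Rightarrow$(i) of Theorem~\ref{thm_subseq-Darling} with $n_k=k$ then yields $\II_H(n;A)\to0$ as $n\to\infty$ through the integers, for every $A>1$.

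The only genuine obstacle is that Theorem~\ref{SV_from_I} requires $\II_H(t;A)\to0$ along \emph{real} $t\to\infty$, whereas we only have it along integers. I would pass from integers to reals using the rescaling identity \eqref{eq:Rt-rescaling}, as in the proof of Lemma~\ref{lem_common-subsequence}. For real $t$, put $n=\lfloor t\rfloor$. Then
\[
R_t(u)=\frac{t}{n}R_n\!\left(\frac{nu}{t}\right),
\]
and for $u\in[A^{-1},A]$ and large $t$ we have $nu/t\in[(2A)^{-1},2A]$. This gives
\[
\II_H(t;A)\le \frac{t}{n}\,\II_H(n;2A)\to0.
\]
Theorem~\ref{SV_from_I} now gives $H\in SV$. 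Alternatively, one could cite the classical Darling equivalence \eqref{darling-sv} directly from $M_n/S_n\pto1$, but the route above keeps the argument self-contained within the paper's results.
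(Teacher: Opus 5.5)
Your proposal is correct and takes essentially the paper's route: the paper also specializes Theorem~\ref{thm_subseq-Darling} and the maximum-weight characterization (Lemma~\ref{lem_maximum-weight}, which it reaches through Theorem~\ref{thm_subsequent_braimanSV}) to $n_k=k$, and then invokes Theorem~\ref{SV_from_I}. The one thing you add is the explicit passage from integer to real $t$ via the rescaling identity, $\II_H(t;A)\le (t/\lfloor t\rfloor)\,\II_H(\lfloor t\rfloor;2A)$, which the paper's one-line proof leaves implicit; your argument for this step is correct.
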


\begin{proof}[Proof of Corollary~\ref{cor_breimanSV}]
The statement follows immediately from Theorem~\ref{thm_subseq-Darling},
Theorem~\ref{thm_subsequent_braimanSV} and Theorem~\ref{SV_from_I}, by taking $n_k=k$.
\end{proof}

\section{Subsequential weak limits of $T_n$ and a subsequential analogue of
Theorem~\ref{breiman-rv-thm}}

In this section, we study the possible subsequential
weak limits of $T_n(X)$ for a fixed integrable
random variable $X$.

For each $n \in \N$, let $Y_{n,(1)}\ge\cdots\ge Y_{n,(n)}$ denote the
order statistics of $Y_1,\ldots,Y_n$ in decreasing order, and set
\[
P_{n,j}:=\frac{Y_{n,(j)}}{S_n}\ind_{\{S_n>0\}},
\quad 1\le j\le n,
\qquad P_{n,j}:=0\quad(j>n).
\]
Let
\[
\mathcal P_\downarrow
:=\left\{p=(p_j)_{j\ge1}:
1\ge p_1\ge p_2\ge\cdots\ge0,\quad
\sum_{j\ge1}p_j\le1\right\}
\]
be the space of ranked mass
partitions, equipped with the topology of
coordinatewise convergence.

The following lemma establishes subsequential
convergence of $(P_{n,j})_{j\ge1}$ and $T_n(X)$.
The representation~\eqref{Z_Xrepresentation} follows from
\cite[Proposition~4.7]{Lenzi2026v3}
and is not claimed to be new.
The remaining statements establish the corresponding
compactness, nondegeneracy, and convergence properties
in the subsequential setting.
For completeness, we provide a self-contained proof
of the lemma in Appendix~A.

\begin{lemma}\label{lem_ranked-compactness}
Let $Y$ be nonnegative random variable with unbounded support. Then:

\emph{(i)} for every deterministic sequence $m_k\uparrow\infty$,
there exist a further deterministic subsequence ${n_k}$ and a random element $P=(P_j)_{j\ge1} \in \mathcal P_\downarrow$
such that
\[
(P_{n_k,j})_{j\ge1}\dto(P_j)_{j\ge1}
\qquad\text{in }\mathcal P_\downarrow.
\]
For each prescribed sequence ${m_k}$, the subsequence
${n_k}$ and the law of $P$ can be chosen depending
only on ${m_k}$ and the distribution of $Y$,
independently of $X$.

\emph{(ii)} Along the same subsequence $\{n_k\}$, for every integrable
real-valued $X$ independent of $Y$,
\[
T_{n_k}(X)\dto Z_X,
\]
\begin{equation}\label{Z_Xrepresentation}
  Z_X
:=\sum_{j\ge1}P_jX_j+
\left(1-\sum_{j\ge1}P_j\right)\mathbb EX,
\end{equation}
where $(X_j)_{j\ge1}$ are i.i.d. copies of $X$,
independent of $P$. The series converges
absolutely almost surely.

\emph{(iii)} For every nondegenerate integrable $X$,
\[
Z_X\text{ is nondegenerate}
\quad\Longleftrightarrow\quad
\mathbb P(P_1>0)>0.
\]

\emph{(iv)}
\[
\mathbb EP_{n_k,1}\longrightarrow\mathbb EP_1,
\qquad
\mathbb E\sum_{j\ge1}P_{n_k,j}^{\,2}
\longrightarrow
\mathbb E\sum_{j\ge1}P_j^2.
\]
\end{lemma}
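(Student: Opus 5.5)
The plan is to exploit compactness of $\mathcal P_\downarrow$ and then pass the limit through the conditional structure of $T_n$ given the weights.

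For (i), observe that $\mathcal P_\downarrow$ with the coordinatewise topology is a closed subset of $[0,1]^{\N}$, which is compact and metrizable by Tychonoff. It is closed because monotonicity is preserved under coordinatewise limits, and $\sum_j p_j\le1$ passes to the limit by Fatou. Any sequence of laws on $\mathcal P_\downarrow$ is therefore tight, and Prokhorov's theorem gives a further subsequence $n_k$ of $m_k$ with $(P_{n_k,j})_j\dto P$. The laws of $(P_{n,j})_j$ are functions of the weights only, so $n_k$ and $\Law(P)$ depend only on $(m_k)$ and $\Law(Y)$.

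For (ii), condition on the weights and use exchangeability. Given $Y_1,\dots,Y_n$, the mark attached to the $j$-th largest weight is an i.i.d. copy of $X$, independent of the weights; ties are broken by an independent uniform rule. Hence
\[
T_n\stackrel d=\sum_{j\ge1}P_{n,j}X_j+(1-\textstyle\sum_j P_{n,j})\EE X,
\]
with $(X_j)$ independent of $(P_{n,j})$. The second term vanishes except on $\{S_n=0\}$, where $T_n=0$; this only costs probability $\PP(S_n=0)\to0$.

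I will compare characteristic functions. Fix $J$ and truncate: write $\Phi_J(p)=\EE\exp\bigl(i\theta[\sum_{j\le J}p_jX_j+(1-\sum_{j\le J}p_j)\EE X]\bigr)$, which is continuous in $p$. The tail error is controlled by
\[
\EE\Bigl|\sum_{j>J}p_j(X_j-\EE X)\Bigr|\le\EE\Bigl[\Bigl(\sum_{j>J}p_j^2\Bigr)^{1/2}\Bigr]\cdot(\cdots).
\]
This bound is not available for merely integrable $X$, so I truncate $X$ at level $K$ instead. The piece $X\ind_{|X|>K}$ contributes at most $2\EE|X|\ind_{|X|>K}$ in $L^1$, uniformly in $n$. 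For the bounded centered piece, the conditional variance is $\le 4K^2\sum_{j>J}p_j^2\le 4K^2p_{J}$, and $p_J\le 1/J$. So the tail is uniformly small in $n$, and the same holds for the limit. Weak convergence of $P$ combined with continuity of $\Phi_J$ then yields $T_{n_k}\dto Z_X$. Absolute convergence of the series holds because $\EE\sum_j P_j|X_j|\le\EE|X|$.

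For (iii), first suppose $P_1=0$ almost surely. Then all $P_j=0$, so $Z_X=\EE X$ is degenerate. Conversely, suppose $\PP(P_1>0)>0$. Conditionally on $P$ with $P_1>0$, $Z_X$ is $P_1X_1$ plus an independent term, and $P_1X_1$ is nondegenerate. A sum of independent variables with one nondegenerate summand is nondegenerate, for instance by a variance or concentration-function argument after truncation. Mixing over $P$ preserves this, since a mixture of laws that are not all point masses cannot be a point mass at a single fixed value.

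For (iv), $P_{n,1}$ and $\sum_jP_{n,j}^2$ are bounded by $1$. The map $p\mapsto p_1$ is continuous. The map $p\mapsto\sum_j p_j^2$ is continuous on $\mathcal P_\downarrow$ because $\sum_{j>J}p_j^2\le p_{J+1}\le1/(J+1)$ uniformly. Bounded convergence under weak convergence therefore gives both limits.

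The main obstacle is the uniform tail control in (ii) for merely integrable $X$; it is resolved by the truncation argument combined with the bound $p_J\le1/J$.
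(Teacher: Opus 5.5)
Your proposal is correct and follows essentially the same route as the paper: Prokhorov on the compact space $\mathcal P_\downarrow$ for (i); the exchangeability representation with the tail bound $\sum_{j>r}p_j^2\le 1/(r+1)$ plus truncation of $X$ for (ii); conditioning on everything except $X_1$ for (iii); and uniform approximation of $p\mapsto\sum_j p_j^2$ by finite sums for (iv). The only difference is cosmetic: you close (ii) with characteristic functions and a single combined truncation in $J$ and $K$, whereas the paper applies Billingsley's converging-together theorem twice, first for bounded $X$ via an $L^2$/Chebyshev bound and then for general $X$ via an $L^1$ truncation.
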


Now we are ready to formulate and prove 
the generalization of Theorem~\ref{breiman-rv-thm}. Define
\[
A(u):=\int_0^u H(t)\,dt
     =\mathbb E(Y\wedge u),
\qquad u>0,
\]
and
\[
Q_n:=\sum_{j\ge1}P_{n,j}^2
=\frac{\sum_{i=1}^nY_i^2}{S_n^2}
\ind_{\{S_n>0\}}.
\]
We shall use the following characterization
due to Fuchs, Joffe and Teugels
\cite[Theorem~5.6]{FuchsJoffeTeugels2002}:
\begin{equation}\label{AinSVcharact}
 A\in SV
\quad\Longleftrightarrow\quad
\lim_{n\to\infty}\mathbb EQ_n=0.
\end{equation}

\begin{theorem}\label{thm_subsequent_breimanRV}
Let $Y$ be a nonnegative random variable with
unbounded support and tail distribution function
$H\notin\ISSV$. The following statements are
equivalent:

\emph{(i)} The function
\[
A(u):=\int_0^u H(t)\,dt,\qquad u>0,
\]
is not slowly varying;

\emph{(ii)} There exists a deterministic strictly
increasing sequence of integers $n_k\to\infty$,
depending only on $H$, such that, for every
nondegenerate integrable random variable $X$
independent of $Y$,
\[
T_{n_k}(X)\dto Z_X,
\]
where $Z_X$ is nondegenerate and
$\Law(Z_X)\ne\Law(X)$;

\emph{(iii)} There exist a nondegenerate
integrable random variable $X$ independent
of $Y$ and a deterministic strictly increasing
sequence of integers $n_k\to\infty$ such that
\[
T_{n_k}(X)\dto Z
\]
for some nondegenerate random variable $Z$.
\end{theorem}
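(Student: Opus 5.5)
We will prove the cycle (i)$\Rightarrow$(ii)$\Rightarrow$(iii)$\Rightarrow$(i), using Lemma~\ref{lem_ranked-compactness}, the Fuchs--Joffe--Teugels characterization \eqref{AinSVcharact}, and the results of Section~2 to rule out the limit $X$ itself.

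For (i)$\Rightarrow$(ii), assume $A\notin SV$. By \eqref{AinSVcharact}, $\EE Q_n\not\to0$, so there are $\eta>0$ and a deterministic sequence $m_k\uparrow\infty$ with $\EE Q_{m_k}\ge\eta$ for all $k$. Apply Lemma~\ref{lem_ranked-compactness}(i) to $(m_k)$ to obtain a further subsequence $(n_k)$ and a limit $P\in\mathcal P_\downarrow$. Both depend only on $H$ and on the choice of $(m_k)$, which itself depends only on $H$. By part (iv),
\[
\EE\sum_j P_j^2=\lim_k \EE Q_{n_k}\ge\eta>0,
\]
so $\PP(P_1>0)>0$. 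Then part (ii) gives $T_{n_k}(X)\dto Z_X$ for every integrable $X$, and part (iii) shows that $Z_X$ is nondegenerate whenever $X$ is. It remains to show $\Law(Z_X)\ne\Law(X)$. Suppose instead that $Z_X\stackrel d=X$ for some nondegenerate integrable $X$. Then $T_{n_k}(X)\dto X$, and Lemma~\ref{lem_maximum-weight} gives $M_{n_k}/S_{n_k}\pto1$. By Theorem~\ref{thm_subseq-Darling} this means $\II_H(n_k;A)\to0$ for every $A>1$, so $H\in\ISSV$, contradicting the hypothesis. This argument is clean because Lemma~\ref{lem_maximum-weight} holds along arbitrary deterministic subsequences.

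(ii)$\Rightarrow$(iii) is immediate: take any nondegenerate integrable $X$, for instance a symmetric $\pm1$ variable.

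For (iii)$\Rightarrow$(i), argue by contradiction and assume $A\in SV$. By \eqref{AinSVcharact}, $\EE Q_n\to0$. Since $P_{n,1}^2\le Q_n$, we get $\EE P_{n,1}^2\to0$, hence $M_n/S_n=P_{n,1}\pto0$ along the full sequence. Now take $X$ and $(n_k)$ as in (iii), and apply Lemma~\ref{lem_ranked-compactness}(i) with $m_k:=n_k$ to extract a further subsequence $(n'_k)$ along which $P_{n'_k}\dto P$. Because $P_{n'_k,1}\pto0$, the limit satisfies $P_1=0$ almost surely, so all $P_j=0$. Part (ii) of the lemma then gives $T_{n'_k}(X)\dto \EE X$, a degenerate law. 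But $T_{n'_k}(X)\dto Z$ along this subsequence of $(n_k)$, so $Z$ would be degenerate, a contradiction. Hence $A\notin SV$.

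The main point requiring care is in (i)$\Rightarrow$(ii): the extracted subsequence must keep $\EE Q_{n_k}$ bounded away from zero. This is why we first choose $(m_k)$ with $\EE Q_{m_k}\ge\eta$ and only then apply compactness, combined with the moment convergence in Lemma~\ref{lem_ranked-compactness}(iv). A secondary check is that the nondegeneracy criterion in Lemma~\ref{lem_ranked-compactness}(iii) applies uniformly to all nondegenerate $X$ along the single subsequence $(n_k)$; this holds because $P$ does not depend on $X$. The hypothesis $H\notin\ISSV$ is used only to exclude $\Law(Z_X)=\Law(X)$.
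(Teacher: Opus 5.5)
Your proposal is correct and follows the paper's proof essentially step for step. It uses the same ingredients: extracting $(m_k)$ with $\EE Q_{m_k}$ bounded below, then compactness plus Lemma~\ref{lem_ranked-compactness}(iv) to get $\PP(P_1>0)>0$, and the same contradiction via $P_{n,1}^2\le Q_n$ for (iii)$\Rightarrow$(i). The one cosmetic difference is how you rule out $\Law(Z_X)=\Law(X)$: you chain Lemma~\ref{lem_maximum-weight} and Theorem~\ref{thm_subseq-Darling} directly, whereas the paper cites Theorem~\ref{thm_subsequent_braimanSV}, whose proof of (iii)$\Rightarrow$(i) is exactly that chain.
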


\begin{proof}[Proof of Theorem~\ref{thm_subsequent_breimanRV}]
We first prove (i)$\Rightarrow$(ii).
Suppose that $A\notin SV$. By the characterization~\eqref{AinSVcharact},
$\mathbb EQ_n$ does not converge to zero.
Since $0\le Q_n\le1$, there exist $\delta>0$
and a deterministic strictly increasing sequence
$m_k\to\infty$ such that
\[
\mathbb EQ_{m_k}\ge\delta,
\qquad k\ge1.
\]
By Lemma~\ref{lem_ranked-compactness}(i),
we may pass to a further subsequence, denoted
by $n_k$, such that
\[
(P_{n_k,j})_{j\ge1}\dto P
\qquad\text{in }\mathcal P_\downarrow.
\]
Part (iv) of the same lemma gives
\[
\mathbb E\sum_{j\ge1}P_j^2
=\lim_{k\to\infty}\mathbb EQ_{n_k}
\ge\delta>0.
\]
Consequently, $\mathbb P(P_1>0)>0$.
By Lemma~\ref{lem_ranked-compactness}(ii)--(iii),
for every nondegenerate integrable $X$,
\[
T_{n_k}(X)\dto Z_X,
\]
where $Z_X$ is nondegenerate. Moreover,
the subsequence $\{n_k\}$ depends only on $H$.

It remains to show that
$\Law(Z_X)\ne\Law(X)$.
Suppose, for contradiction, that
$\Law(Z_X)=\Law(X)$ for some
nondegenerate integrable $X$.
Then
\[
T_{n_k}(X)\dto X.
\]
By Theorem~\ref{thm_subsequent_braimanSV},
this implies $H\in\ISSV$, contradicting
our assumption. Thus
$\Law(Z_X)\ne\Law(X)$ for every
nondegenerate integrable $X$, proving (ii).

The implication
(ii)$\Rightarrow$(iii)
is immediate.

Finally, suppose that (iii) holds. Assume, for contradiction, that
$A\in SV.$ Then by~\eqref{AinSVcharact}
\[
\mathbb EQ_n\longrightarrow0.
\]
Since
\[
P_{n,1}^2\le Q_n,
\]
we have $P_{n,1}\pto0.$
Consider given in (iii) subsequence $\{n_k\}$.
By Lemma~\ref{lem_ranked-compactness}(i),
it has a further subsequence $\{b_k\}$ along which
\[
(P_{b_k,j})_{j\ge1}\dto(P_j)_{j\ge1}
\qquad\text{in }\mathcal P_\downarrow.
\]
Since $P_{n,1}\pto0$,
the convergence $P_{b_k,1}\dto P_1$
implies that $P_1=0$ almost surely.
As $0\le P_j\le P_1$ for every $j\ge1$,
we also have $P_j=0$ almost surely
for all $j\ge1$. Then, by~\eqref{Z_Xrepresentation},
$Z=\mathbb EX$ almost surely, contradicting
the nondegeneracy of $Z$.
Therefore $A\notin SV$, completing the proof.
\end{proof}
\begin{remark}
Unlike the slowly varying case considered in
Theorem~\ref{thm_subsequent_braimanSV},
Theorem~\ref{thm_subsequent_breimanRV}
does not recover the corresponding case of
Breiman's conjecture by simply taking
$n_k=k$. Indeed, the theorem guarantees
the existence of a suitable subsequence,
but does not establish convergence along
the full sequence or regular variation
of $H$.

If $T_n(X)\dto Z$ for some nondegenerate
integrable $X$, where $Z$ is nondegenerate
and $\Law(Z)\ne\Law(X)$, our results imply
that $H\notin\ISSV$ and $A\notin SV$.
However, these conditions do not imply
regular variation of $H$. Thus, the
conclusion of Breiman's conjecture requires
additional information arising from
full-sequence convergence.
\end{remark}

\section{Complementary remarks and open problems}
\label{sec_sublimit_families}

\subsection{Limit distributions and quadratic concentration}

Let $\mathcal P(\mathbb R)$ denote the space of Borel probability
measures on $\mathbb R$, equipped with the topology of weak
convergence. For an integrable mark $X$, put $\mu=\mathbb EX$.

\begin{definition}\label{def_sublimits_family}
For a fixed distribution $\Law(X)$ and nonnegative $Y$ with unbounded support and tail distribution
function $H$, define
\[
\mathscr C_H(X)
:=\left\{\nu\in\mathcal P(\mathbb R):
\Law(T_{n_k}(X))\Rightarrow\nu
\text{ for some }n_k\uparrow\infty\right\}.
\]
Furthermore, with 
\[
Q_n:=
\frac{\sum_{i=1}^nY_i^2}{S_n^2}
\ind_{\{S_n>0\}}, \qquad n \in \N,
\] 
put
\[
d_n:=\mathbb EQ_n,\qquad
\mathscr Q(H):=\operatorname{Acc}\bigl((d_n)_{n\ge1}\bigr),
\]
where $\operatorname{Acc}$ denotes the set of subsequential limits.
\end{definition}

For every $n\in\N$, put
\[
r_n=\frac{Y_{n+1}}{S_{n+1}}
\ind_{\{S_{n+1}>0\}}.
\]
Then, for $n\in\N$,
\[
Q_{n+1}=(1-r_n)^2Q_n+r_n^2,
\]
and hence
\[
|Q_{n+1}-Q_n|
\le 2r_n+r_n^2\le 3r_n.
\]
By exchangeability,
\[
\mathbb E r_n
=\frac{\mathbb P(S_{n+1}>0)}{n+1}
\le\frac1{n+1},\qquad n\in\N.
\]
Consequently,
\[
|d_{n+1}-d_n|
\le\frac3{n+1},\qquad n\in\N.
\]
Since $0\le d_n\le1$ and
$|d_{n+1}-d_n|\to0$, the set of
accumulation points of $\{d_n\}$
is the closed interval
\begin{equation}\label{eq_Q_interval}
\mathscr Q(H)=
\left[\liminf_{n\to\infty}d_n,
      \limsup_{n\to\infty}d_n\right].
\end{equation}

An analogue of the interval property in
\eqref{eq_Q_interval} also holds for
$\mathscr C_H(X)$ in the form of connectedness.
\begin{proposition}\label{propos_on_C_H(X)}
$\mathscr C_H(X)$ is a nonempty, compact, and
connected subset of $\mathcal P(\mathbb R)$, equipped
with the bounded-Lipschitz metric.
\end{proposition}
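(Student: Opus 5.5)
The plan is to identify $\mathscr C_H(X)$ with the set of limit points of the sequence $\nu_n:=\Law(T_n(X))$ in the metric space $(\mathcal P(\R),d_{BL})$. We then apply the classical fact: in a compact metric space, the accumulation set of a sequence $(x_n)$ with $d(x_{n+1},x_n)\to0$ is nonempty, compact and connected. This mirrors the argument giving~\eqref{eq_Q_interval} for $(d_n)$. Two ingredients are needed: relative compactness (tightness) of $(\nu_n)$, and asymptotically small steps $d_{BL}(\nu_{n+1},\nu_n)\to0$.

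\emph{Tightness.} Put $\mu=\EE X$ and $W_n=T_n-\mu\ind_{\{S_n>0\}}$. As in the proof of Lemma~\ref{lem_maximum-weight}, Jensen's inequality gives $\EE(|W_n|-K)_+\le\EE(|X-\mu|-K)_+$. Hence $\sup_n\EE|T_n|\le \EE|X|+|\mu|<\infty$, and $(\nu_n)$ is tight. By Prokhorov's theorem, the closure $\overline{\{\nu_n\}}$ is compact in the weak topology, which $d_{BL}$ metrizes. Every subsequence therefore has a further weakly convergent subsequence, so $\mathscr C_H(X)\neq\emptyset$. The set $\mathscr C_H(X)$ is the accumulation set of a sequence in a metric space, so it is closed. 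Being closed inside the compact set $\overline{\{\nu_n\}}$, it is compact. (Alternatively, nonemptiness follows from Lemma~\ref{lem_ranked-compactness}(ii).)

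\emph{Small increments.} Couple $T_n$ and $T_{n+1}$ on one probability space. With $r_n=Y_{n+1}/S_{n+1}$ on $\{S_{n+1}>0\}$, we have $T_{n+1}=(1-r_n)T_n+r_nX_{n+1}$ on $\{S_n>0\}$. Therefore
\[
|T_{n+1}-T_n|\le r_n\bigl(|T_n|+|X_{n+1}|\bigr)\quad\text{on }\{S_n>0\}.
\]
On $\{S_n=0<S_{n+1}\}$ we have $T_{n+1}=X_{n+1}$ and $T_n=0$, and $\PP(S_n=0)\to0$. Since bounded-Lipschitz test functions $f$ satisfy $|f|\le1$ and $\mathrm{Lip}(f)\le1$,
\[
d_{BL}(\nu_{n+1},\nu_n)\le \EE\bigl[\min\{2,|T_{n+1}-T_n|\}\bigr].
\]
We bound the right side by $2\PP(S_n=0)+\EE\min\{2,r_n(|T_n|+|X_{n+1}|)\}$. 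Exchangeability gives $\EE r_n\le 1/(n+1)$, so $r_n\pto0$. Since $|T_n|+|X_{n+1}|$ is tight, the product $r_n(|T_n|+|X_{n+1}|)\pto0$, and bounded convergence yields $d_{BL}(\nu_{n+1},\nu_n)\to0$.

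\emph{Connectedness.} This is the main step. Suppose, for a contradiction, that $\mathscr C:=\mathscr C_H(X)=F_1\cup F_2$ with $F_1,F_2$ disjoint, nonempty and closed, hence compact. Let $\eta:=d_{BL}(F_1,F_2)>0$, and set $G_i=\{\nu: d_{BL}(\nu,F_i)<\eta/3\}$. For all large $n$, $\nu_n\in G_1\cup G_2$: otherwise a subsequence stays outside $G_1\cup G_2$, and by compactness it has a limit point in $\mathscr C$ lying outside the open set $G_1\cup G_2$, which is impossible. Both $G_1$ and $G_2$ are visited infinitely often, because $F_1$ and $F_2$ consist of limit points. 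Moreover, once $d_{BL}(\nu_{n+1},\nu_n)<\eta/3$, the sequence cannot jump directly from $G_1$ to $G_2$, since $d_{BL}(G_1,G_2)\ge\eta/3$. It must therefore pass through the complement of $G_1\cup G_2$ infinitely often. This contradicts the previous observation, so $\mathscr C_H(X)$ is connected. The delicate points to check carefully are the degenerate events $\{S_n=0\}$ and the fact that $d_{BL}$ metrizes weak convergence on $\mathcal P(\R)$ (Dudley), which justifies the compactness transfer.
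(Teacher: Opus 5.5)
Your proof is correct and follows the same route as the paper. Nonemptiness and compactness come from a uniform $L^1$ bound plus Prokhorov, and connectedness comes from $d_{\mathrm{BL}}(\nu_{n+1},\nu_n)\to0$ together with the classical cluster-set lemma; the paper simply cites that lemma, and it obtains the explicit bound $\EE|T_{n+1}(X)-T_n(X)|\le 2\EE|X|/(n+1)$ by conditioning on the weights in your inequality $|T_{n+1}-T_n|\le r_n(|T_n|+|X_{n+1}|)$ (which in fact holds everywhere, including on $\{S_n=0\}$), where you use a convergence-in-probability argument and prove the lemma directly.
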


\begin{proof}[Proof of Proposition~\ref{propos_on_C_H(X)}]
Since
\[
\EE|T_n(X)|\leq\mathbb E|X|, \quad n \in \N,
\]
the laws $\mu_n:=\Law(T_n(X))$ form a tight
family. By Prokhorov's theorem, $(\mu_n)$ is
relatively compact, and hence its cluster set
$\mathscr C_H(X)$ is nonempty and compact.

Finally, the cluster set of a relatively compact
sequence $(x_n)$ in a metric space is connected
whenever $d(x_n,x_{n+1})\to0,$ so connectedness
follows from
\[
d_{\mathrm{BL}}(\mu_{n+1},\mu_n)
\leq\mathbb E|T_{n+1}(X)-T_n(X)|
\leq\frac{2\mathbb E|X|}{n+1}
\longrightarrow0.
\]
\end{proof}

Immediately from Proposition~\ref{propos_on_C_H(X)} we have: 
\begin{corollary}\label{cor_C_cardinality}
Under the assumptions of
Proposition~\ref{propos_on_C_H(X)},
\[
|\mathscr C_H(X)|=1
\qquad\text{or}\qquad
|\mathscr C_H(X)|=2^{\aleph_0}.
\]
\end{corollary}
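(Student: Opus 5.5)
The corollary follows from Proposition~\ref{propos_on_C_H(X)} together with a general cardinality fact about connected metric spaces.

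By the proposition, $\mathscr C_H(X)$ is a nonempty connected subset of the metric space $(\mathcal P(\mathbb R), d_{\mathrm{BL}})$. It therefore suffices to show that a nonempty connected metric space with at least two points has cardinality exactly $2^{\aleph_0}$.

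\emph{Lower bound.} Suppose $\mathscr C_H(X)$ contains two distinct laws $\nu_0\ne\nu_1$. Fix $\nu_0$ and consider the map
\[
f:\mathscr C_H(X)\to[0,\infty), \qquad f(\nu)=d_{\mathrm{BL}}(\nu,\nu_0).
\]
The map $f$ is $1$-Lipschitz, hence continuous. The continuous image of a connected set is connected, so $f(\mathscr C_H(X))$ is an interval in $\mathbb R$. This interval contains $0=f(\nu_0)$ and $r:=f(\nu_1)>0$, so it contains $[0,r]$. Consequently $f$ maps $\mathscr C_H(X)$ onto a set of cardinality $2^{\aleph_0}$, and therefore $|\mathscr C_H(X)|\ge 2^{\aleph_0}$.

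\emph{Upper bound.} Weak convergence on $\mathcal P(\mathbb R)$ is metrized by $d_{\mathrm{BL}}$, and $\mathcal P(\mathbb R)$ is separable; for instance, rational convex combinations of Dirac masses at rational points form a countable dense set. Each $\nu\in\mathcal P(\mathbb R)$ is determined by its distribution function, which is right-continuous and hence determined by its values on $\mathbb Q$. Thus $\nu\mapsto (F_\nu(q))_{q\in\mathbb Q}$ injects $\mathcal P(\mathbb R)$ into $[0,1]^{\mathbb Q}$, a set of cardinality $(2^{\aleph_0})^{\aleph_0}=2^{\aleph_0}$. Hence $|\mathscr C_H(X)|\le|\mathcal P(\mathbb R)|\le 2^{\aleph_0}$.

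By Cantor--Schröder--Bernstein, $\mathscr C_H(X)$ is either a singleton or has cardinality exactly $2^{\aleph_0}$.

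There is no real obstacle here. The only point to handle with care is that connectedness is taken with respect to $d_{\mathrm{BL}}$, which is exactly the metric in which Proposition~\ref{propos_on_C_H(X)} is stated, so the continuity of $f$ is immediate.
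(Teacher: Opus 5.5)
Your proof is correct and is exactly the argument the paper leaves implicit when it calls the corollary immediate from Proposition~\ref{propos_on_C_H(X)}. Connectedness in $d_{\mathrm{BL}}$, together with the $1$-Lipschitz map $\nu\mapsto d_{\mathrm{BL}}(\nu,\nu_0)$, forces at least $2^{\aleph_0}$ points once there are two, and the injection $\nu\mapsto (F_\nu(q))_{q\in\mathbb Q}$ gives the matching upper bound (your remark on separability of $\mathcal P(\mathbb R)$ is not needed for that bound, but it does no harm).
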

\begin{remark}
If $X$ is degenerate, then
$\mathscr C_H(X)=\{\Law(X)\}$.
Thus the second alternative in
Corollary~\ref{cor_C_cardinality}
is possible only for nondegenerate $X$.
\end{remark}

Corollary~\ref{cor_C_cardinality} establishes a sharp
dichotomy between the full-sequence convergence
considered in the classical Breiman theorem and
the genuinely subsequential regime addressed by
our generalizations: failure of full-sequence
convergence implies the existence of continuum
many distinct subsequential limiting laws.

\subsection{A unified description of the subsequential regimes}

The next result expresses both subsequential Breiman theorems in
terms of $\mathscr C_H(X)$ and $\mathscr Q(H)$.

\begin{theorem}\label{thm_subseq_limit_classification}
Let $Y$ be nonnegative with unbounded support and tail
function $H.$ Let $X$ be nondegenerate and integrable,
and put $\mu=\EE X$. Then
\begin{align}
0\in\mathscr Q(H)
&\quad\Longleftrightarrow\quad
\delta_\mu\in\mathscr C_H(X),\label{eq_CQ_zero}\\
1\in\mathscr Q(H)
&\quad\Longleftrightarrow\quad
\Law(X)\in\mathscr C_H(X)
\quad\Longleftrightarrow\quad H\in\ISSV,\label{eq_CQ_one}\\
\mathscr Q(H)\cap(0,1)\ne\varnothing
&\quad\Longleftrightarrow\quad
\mathscr C_H(X)\text{ contains a nondegenerate law distinct from } \Law(X).
\label{eq_CQ_interior}
\end{align}
Moreover,
\begin{align}
\mathscr Q(H)=\{0\}
&\quad\Longleftrightarrow\quad A\in SV
\quad\Longleftrightarrow\quad
\mathscr C_H(X)=\{\delta_\mu\},\label{eq_CQ_full_zero}\\
\mathscr Q(H)=\{1\}
&\quad\Longleftrightarrow\quad H\in SV
\quad\Longleftrightarrow\quad
\mathscr C_H(X)=\{\Law(X)\},\label{eq_CQ_full_one}\\
\mathscr Q(H)\cap(0,1)\ne\varnothing
&\quad\Longleftrightarrow\quad
A\notin SV\ \text{ and }\ H\notin SV.\label{eq_CQ_interior_analytic}
\end{align}

For any $s\in\mathscr Q(H)\cap(0,1)$, there exists
a deterministic subsequence $\{n_k\}$, depending only
on $H$ and $s$, such that
\[
d_{n_k}\to s
\quad\text{and}\quad
T_{n_k}(X)\xRightarrow{d} Z_X
\]
for every integrable $X$. The subsequence is
independent of $X$, and the limit is nondegenerate
and differs in law from $X$ whenever $X$ is
nondegenerate.
\end{theorem}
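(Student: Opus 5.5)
The plan is to reduce every statement to a single dictionary between subsequential limits of $d_n$ and the limiting ranked partition $P$ of Lemma~\ref{lem_ranked-compactness}. Fix any subsequence $m_k$. By Lemma~\ref{lem_ranked-compactness}(i),(iv) there is a further subsequence $n_k$, depending only on $H$, along which $(P_{n_k,j})\dto P$, $T_{n_k}(X)\dto Z_X$ with $Z_X$ as in \eqref{Z_Xrepresentation}, and $d_{n_k}\to s:=\EE\sum_j P_j^2$. Conversely, every $\nu\in\mathscr C_H(X)$ arises as $\Law(Z_X)$ along such a refined subsequence, and its $s$ lies in $\mathscr Q(H)$. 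The whole proof then rests on two claims about this dictionary:
\[
s=0\iff Z_X=\mu \text{ a.s.}, \qquad s=1\iff \Law(Z_X)=\Law(X).
\]

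\emph{The case $s=0$.} Since $P_1^2\le\sum_j P_j^2\le P_1$, we have $s=0$ iff $P_1=0$ a.s. By Lemma~\ref{lem_ranked-compactness}(iii), this holds iff $Z_X$ is degenerate. In that case \eqref{Z_Xrepresentation} gives $Z_X=\mu$.

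\emph{The case $s=1$, direction ``$\Rightarrow$''.} If $s=1$, then $\EE Q_{n_k}\to1$, so $Q_{n_k}\pto1$. Because $Q_n\le M_n/S_n\le1$, this gives $M_{n_k}/S_{n_k}\pto1$, and the forward part of Lemma~\ref{lem_maximum-weight} yields $\Law(Z_X)=\Law(X)$.

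\emph{The case $s=1$, direction ``$\Leftarrow$''.} This is the step I expect to be the main obstacle. I will rerun the $\kappa$-argument of Lemma~\ref{lem_maximum-weight} directly on the limit. Write $V_j=X_j-\mu$ and $\Sigma=\sum_j P_j\le1$, so that $Z_X-\mu=\sum_j P_jV_j$. Conditionally on $P$, on $\{\Sigma>0\}$ decompose
\[
\sum_j P_jV_j=\Sigma^{-1}\sum_{i,j}P_iP_j\,\frac{V_i+V_j}{2}.
\]
The triangle inequality then gives
\[
\EE\bigl[|Z_X-\mu|\mid P\bigr]\le \Sigma\,\EE|V|-\kappa\,\Sigma^{-1}\Bigl(\Sigma^2-\sum_jP_j^2\Bigr)\le \EE|V|-\kappa\Bigl(\Sigma^2-\sum_j P_j^2\Bigr).
\]
If $\Law(Z_X)=\Law(X)$, then $\EE|Z_X-\mu|=\EE|V|$, which forces $\Sigma=1$ and $\sum_jP_j^2=\Sigma^2$ a.s., i.e.\ $P=(1,0,0,\dots)$ and hence $s=1$. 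Some care is needed with the defective mass $1-\Sigma$ and the event $\{\Sigma=0\}$; the first inequality above loses $(1-\Sigma)\EE|V|$, and that loss must vanish too. Equivalently, one can argue through Lemma~\ref{lem_maximum-weight} and Theorem~\ref{thm_subseq-Darling} as a fallback: $T_{n_k}(X)\dto X$ forces $M_{n_k}/S_{n_k}\pto1$, hence $Q_{n_k}\ge (M_{n_k}/S_{n_k})^2\pto1$ and $s=1$. This fallback is in fact cleaner.

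\emph{Proof of \eqref{eq_CQ_zero}, \eqref{eq_CQ_one}, \eqref{eq_CQ_interior}.} These follow from the dictionary. The third equivalence in \eqref{eq_CQ_one} holds because $1\in\mathscr Q(H)$ iff $M_{n_k}/S_{n_k}\pto1$ along some subsequence (using $Q_n\le M_n/S_n$ and $Q_n\ge (M_n/S_n)^2$). By Corollary~\ref{Darling_corol}, the latter is equivalent to $H\in\ISSV$. For \eqref{eq_CQ_interior}, a limit law is nondegenerate and different from $\Law(X)$ exactly when its $s$ differs from both $0$ and $1$.

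\emph{Proof of \eqref{eq_CQ_full_zero}, \eqref{eq_CQ_full_one}, \eqref{eq_CQ_interior_analytic}.} For \eqref{eq_CQ_full_zero}, $\mathscr Q(H)=\{0\}$ means $d_n\to0$, which by \eqref{AinSVcharact} is equivalent to $A\in SV$. By the dictionary it is also equivalent to every element of $\mathscr C_H(X)$ being $\delta_\mu$. For \eqref{eq_CQ_full_one}, $d_n\to1$ is equivalent to $M_n/S_n\pto1$, which is equivalent to $H\in SV$ by Theorem~\ref{thm_subseq-Darling} with $n_k=k$ and Theorem~\ref{SV_from_I} (or \eqref{darling-sv}). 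For \eqref{eq_CQ_interior_analytic}, $\mathscr Q(H)$ is a closed subinterval of $[0,1]$ by \eqref{eq_Q_interval}. It misses $(0,1)$ iff it equals $\{0\}$ or $\{1\}$, i.e.\ iff $A\in SV$ or $H\in SV$.

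\emph{The final assertion.} Given $s\in\mathscr Q(H)\cap(0,1)$, pick $m_k$ with $d_{m_k}\to s$, a choice depending only on $H$. Refine $m_k$ via Lemma~\ref{lem_ranked-compactness}(i); this still depends only on $H$ and $s$, and the limit $s$ is preserved. Along the refined subsequence $T_{n_k}(X)\dto Z_X$ for every integrable $X$. When $X$ is nondegenerate, the dictionary with $s\in(0,1)$ shows that $Z_X$ is nondegenerate and $\Law(Z_X)\ne\Law(X)$.
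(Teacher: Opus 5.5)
Your proposal is correct and is essentially the argument the paper has in mind. The paper's proof only says that the theorem collects consequences of Lemma~\ref{lem_ranked-compactness}, Theorems~\ref{thm_subsequent_braimanSV} and~\ref{thm_subsequent_breimanRV}, and the earlier results. Your per-subsequence dictionary fills in exactly those citations: $s=0$ iff $Z_X=\mu$ a.s.\ by Lemma~\ref{lem_ranked-compactness}(iii)--(iv), and $s=1$ iff $\Law(Z_X)=\Law(X)$ by Lemma~\ref{lem_maximum-weight} together with $(M_n/S_n)^2\le Q_n\le M_n/S_n$.

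Arguing along the specific subsequence, rather than invoking Theorem~\ref{thm_subsequent_breimanRV}, also correctly avoids that theorem's hypothesis $H\notin\ISSV$. This matters because \eqref{eq_CQ_interior} and the final assertion must also cover cases such as Example~\ref{ex_ISSV_not_SV}, where $H\in\ISSV$ yet $\mathscr Q(H)\cap(0,1)\neq\varnothing$.
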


\begin{proof}[Proof of Theorem~\ref{thm_subseq_limit_classification}]
Theorem~\ref{thm_subseq_limit_classification} collects the consequences of
Theorem~\ref{thm_subsequent_braimanSV},
Theorem~\ref{thm_subsequent_breimanRV},
Lemma~\ref{lem_ranked-compactness},
and the preceding results.
The characterization of the degenerate limiting law
in~\eqref{eq_CQ_zero} follows directly from
Lemma~\ref{lem_ranked-compactness}.
\end{proof}

\subsection{Full-sequence convergence and the quadratic-weight gap}\label{ssec_full_conv}

Our subsequential results do not, by themselves, provide the
nonzero-index converse in Breiman's conjecture. Indeed, assume
that, for some nondegenerate integrable $X$,
\begin{equation}\label{eq_full_limit_hyp}
T_n(X)\dto Z,
\qquad Z\text{ is nondegenerate},
\qquad\Law(Z)\ne\Law(X).
\end{equation}
Then $\mathscr C_H(X)=\{\Law(Z)\}$, and
\eqref{eq_CQ_zero}--\eqref{eq_CQ_one} imply
\begin{equation}\label{eq_Q_interior_bounds}
0<\liminf_{n\to\infty}d_n
\le\limsup_{n\to\infty}d_n<1.
\end{equation}
However, these inequalities do not imply that $\{d_n\}$ converges.

Nevertheless, for finite-variance marks $X$ the quadratic concentration $\lim_{n \to \infty} d_n$ is
identified directly from the limiting law.

\begin{proposition}\label{prop_quadratic_L2}
Suppose $0<\operatorname{Var}(X)<\infty,$ and
$T_n(X)\dto Z$. Then
\[
\lim_{n\to\infty}d_n
=\frac{\operatorname{Var}(Z)}{\operatorname{Var}(X)}.
\]
\end{proposition}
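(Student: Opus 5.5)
Write $\sigma^2=\operatorname{Var}(X)$, $\mu=\EE X$, and $W_n=T_n-\mu\ind_{\{S_n>0\}}=\sum_i p_{n,i}V_i$ with $V_i=X_i-\mu$, as in the proof of Lemma~\ref{lem_maximum-weight}. Conditioning on the weights and using independence and centering of the marks gives
\[
\EE[W_n^2\mid Y_1,\ldots,Y_n]=\sigma^2 Q_n ,
\qquad\text{hence}\qquad
\EE W_n^2=\sigma^2 d_n .
\]
It is therefore enough to prove $\EE W_n^2\to\operatorname{Var}(Z)$, which amounts to upgrading $T_n\dto Z$ to convergence of second moments.

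\textbf{Step 1 (the mean).} Since $\PP(S_n=0)=\PP(Y=0)^n\to0$, we have $W_n\dto Z-\mu$. The $L^1$ uniform integrability from the Jensen argument in Lemma~\ref{lem_maximum-weight} gives $\EE Z=\mu$.

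\textbf{Step 2 (uniform integrability of $W_n^2$).} This is the main point. Apply the conditional Jensen argument to the convex function $x\mapsto(x^2-K)_+$. On $\{S_n>0\}$ the weights $p_{n,i}$ sum to one, so
\[
(W_n^2-K)_+\le \sum_i p_{n,i}(V_i^2-K)_+ .
\]
Taking expectations, with the marks independent of the weights, yields
\[
\EE(W_n^2-K)_+\le \EE(V^2-K)_+\to0 \qquad (K\to\infty),
\]
uniformly in $n$. So $(W_n^2)$ is uniformly integrable.

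\textbf{Step 3 (conclusion).} With $W_n\dto Z-\mu$ and $(W_n^2)$ uniformly integrable, we get
\[
\EE W_n^2\to\EE(Z-\mu)^2=\operatorname{Var}(Z),
\]
using $\EE Z=\mu$ from Step 1. Therefore $d_n=\EE W_n^2/\sigma^2\to\operatorname{Var}(Z)/\sigma^2$.

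\textbf{Possible obstacle.} Step 2 is where I expect trouble. Its only delicate point is checking that the convex-combination bound really holds pointwise, which relies on $\sum_i p_{n,i}=1$ on $\{S_n>0\}$. If this fails, I would instead get uniform integrability from the uniform bound $\EE W_n^2\le\sigma^2$ combined with a truncation of $X$.
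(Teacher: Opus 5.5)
Your proof is correct and follows essentially the same route as the paper: conditional Jensen for the convex function $x\mapsto(x^2-K)_+$ gives uniform integrability of the squares, hence $\EE Z=\mu$ and convergence of second moments, which combined with the conditional second-moment identity yields $\lim d_n=\operatorname{Var}(Z)/\operatorname{Var}(X)$. The only difference is cosmetic. You center first, obtaining $\EE W_n^2=\sigma^2 d_n$ exactly, whereas the paper works with $T_n$ directly via $\EE T_n^2=\operatorname{Var}(X)d_n+\mu^2\PP(S_n>0)$. The obstacle you flag in Step 2 does not arise, since the pointwise bound holds on $\{S_n>0\}$ and trivially on $\{S_n=0\}$, where $W_n=0$.
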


\begin{proof}[Proof of Proposition~\ref{prop_quadratic_L2}]
By conditional Jensen's inequality, for every $K>0$,
\[
\mathbb E(T_n(X)^2-K)_+
\le \mathbb E(X^2-K)_+.
\]
Since $\mathbb EX^2<\infty$, the family
$\{T_n(X)^2:n\ge1\}$ is uniformly integrable.
Consequently, $T_n(X)\dto Z$ implies
convergence of the first and second moments,
and in particular $\EE Z=\EE X$.

For the second moment
\[
\mathbb ET_n(X)^2
=\operatorname{Var}(X)d_n
+\mu^2\mathbb P(S_n>0).
\]
Since $\mathbb P(S_n>0)\to1$, passing to the
limit gives
\[
\operatorname{Var}(X)\lim_{n\to\infty}d_n
=\mathbb EZ^2-\mu^2
=\operatorname{Var}(Z),
\]
as required.
\end{proof}

Proposition~\ref{prop_quadratic_L2}, together with
the characterization of Fuchs, Joffe and Teugels
\cite[Theorem~5.3]{FuchsJoffeTeugels2002},
\[
d_n\longrightarrow d\in(0,1)
\quad\Longleftrightarrow\quad
H\in RV_{-(1-d)},
\]
establishes the nonzero-index case of Breiman's
conjecture under the additional moment assumption
$\mathbb E X^2<\infty$.

\subsection{Open problem}
The natural approach to closing the gap identified
in~\eqref{eq_Q_interior_bounds} leads to the following conjecture.

\begin{conjecture}[Subsequential separation principle]
\label{conj_subsequential_separation}
Let $X$ be a nondegenerate integrable random variable.
Suppose that, for two deterministic sequences
$n_k,m_k\uparrow\infty$,
\[
T_{n_k}(X)\dto Z_1,
\qquad
T_{m_k}(X)\dto Z_2,
\]
and
\[
d_{n_k}\to d_1,
\qquad
d_{m_k}\to d_2.
\]
Then
\[
d_1\ne d_2
\quad\Longrightarrow\quad
\Law(Z_1)\ne\Law(Z_2).
\]
\end{conjecture}

As shown above, the subsequential separation principle
holds under the additional moment assumption
\[
\EE X^2<\infty.
\]
The following example shows that the principle also holds
for a class of integrable random variables with
$\EE X^2=\infty$, although its validity for arbitrary
integrable marks remains open.

\begin{example}
Let $U_\alpha$ be a symmetric $\alpha$-stable random variable,
$1<\alpha<2$, with characteristic function
\[
\mathbb E e^{itU_\alpha}=\exp(-|t|^\alpha).
\]
Let $G$ be a standard Gaussian random variable independent
of $U_\alpha$, and put
\[
X=U_\alpha+\sigma G,\qquad \sigma>0.
\]
Then $\mathbb E|X|<\infty$, $\mathbb E X^2=\infty$,
and the subsequential separation principle holds for $X$.
\end{example}
\begin{proof}
Suppose that, along two deterministic subsequences,
\[
T_{n_k}(X)\xrightarrow{d}Z_1,\qquad
T_{m_k}(X)\xrightarrow{d}Z_2,
\]
and
\[
d_{n_k}\to d_1,\qquad d_{m_k}\to d_2.
\]
By Lemma~\ref{lem_ranked-compactness}, passing to further
subsequences if necessary, we obtain limiting ranked
mass partitions $P^{(1)}$ and $P^{(2)}$.

Since $\mathbb EX=0$, the corresponding limiting
characteristic functions are
\[
\varphi_{Z_i}(t)=
\mathbb E\exp\left\{
-|t|^\alpha\sum_{j\ge1}(P_j^{(i)})^\alpha
-\frac{\sigma^2t^2}{2}
\sum_{j\ge1}(P_j^{(i)})^2
\right\},
\qquad i=1,2.
\]
Put
\[
a_i=\mathbb E\sum_{j\ge1}(P_j^{(i)})^\alpha.
\]
By Lemma~\ref{lem_ranked-compactness},
\[
\mathbb E\sum_{j\ge1}(P_j^{(i)})^2=d_i.
\]
Since both power sums are bounded by $1$ and
$2\alpha>2$, Taylor's expansion gives
\[
\varphi_{Z_i}(t)
=1-a_i|t|^\alpha-\frac{\sigma^2d_i}{2}t^2
+o(t^2),\qquad t\to0.
\]
If $\mathcal L(Z_1)=\mathcal L(Z_2)$, subtracting
these expansions gives
\[
(a_1-a_2)|t|^\alpha
+\frac{\sigma^2}{2}(d_1-d_2)t^2=o(t^2).
\]
Since $\alpha<2$, this implies $a_1=a_2$,
and consequently $d_1=d_2$, as required.

\end{proof}

More importantly, the nonzero-index regularly varying
case of Breiman's conjecture would follow directly
from this principle, together with the arguments
developed in Subsection~\ref{ssec_full_conv}.
Indeed, suppose that $T_n(X)\dto Z$, where
$Z$ is nondegenerate and $\Law(Z)\ne\Law(X)$.
By the subsequential separation principle,
$(d_n)$ cannot have two distinct accumulation
points, since every subsequence of $T_n(X)$
converges to the same law. Hence $d_n\to d$.
By \eqref{eq_Q_interior_bounds}, $d\in(0,1),$ therefore
$H\in RV_{-(1-d)}.$ 
Thus, we regard the subsequential separation principle
as a natural strengthening of the classical Breiman
conjecture and a promising direction for further
investigation, particularly in light of its recent
claimed resolution by Lenzi~\cite{Lenzi2026v3}.

A related question is whether the quadratic
concentration $d_n=\EE Q_n$ is the most suitable
contrasting functional for distinguishing
subsequential limiting laws.
One may seek \emph{alternative contrasting functionals}
of the normalized weights, depending only on $H$,
whose distinct subsequential limits necessarily
correspond to distinct limiting laws of $T_n(X)$.
Such an approach could lead to a more general
or more tractable formulation of the
subsequential separation principle.

\section*{Acknowledgements}

The author was supported by the Engineering and
Physical Sciences Research Council (EPSRC)
under grant UKRI 2768.

\section*{Appendix A}
\begin{proof}[Proof of Lemma~\ref{lem_ranked-compactness}]

Note that $(P_{m_k,j})_{j\ge1}$ is a random element
of $\mathcal P_\downarrow$ for every $k\ge1$.
Since $\mathcal P_\downarrow$ is compact,
every family of probability measures on it is tight.
In particular, the sequence of probability laws
\[
\left\{
\mathcal L\bigl((P_{m_k,j})_{j\ge1}\bigr)
\right\}_{k\ge1}
\]
is tight. By Prokhorov's theorem, there exist
a further deterministic subsequence $\{n_k\}$
and a random element $P=(P_j)_{j\ge1}$ of
$\mathcal P_\downarrow$ such that
\[
(P_{n_k,j})_{j\ge1}\dto P
\qquad\text{in }\mathcal P_\downarrow.
\]
Since the laws of the ranked weights are determined
entirely by the distribution of $Y$, the subsequence
$\{n_k\}$ and the law of $P$ can be chosen independently
of $X$. This proves (i).

Now let $\pi_n$ be the permutation of $\{1,\ldots,n\}$
that arranges $Y_1,\ldots,Y_n$ in decreasing order,
with smaller indices preceding larger ones in
the case of equal values, so
$$Y_{n,(j)}=Y_{\pi_n(j)}, \qquad 1\le j\le n.$$
Since $(X_i)$ is independent of $(Y_i)$,
the permutation $\pi_n$ is independent of
$(X_i)$. Moreover, as the $X_i$ are i.i.d.,
\[
(X_{\pi_n(1)},\ldots,X_{\pi_n(n)})
\stackrel{d}{=}(X_1,\ldots,X_n),
\]
and the permuted vector remains independent
of $(Y_1,\ldots,Y_n)$. 
Consequently,
\begin{equation}\label{T_n(X)distrXj}
  T_n(X)
=\sum_{j=1}^n P_{n,j}X_{\pi_n(j)}
\stackrel{d}{=}
\sum_{j\ge1}P_{n,j}X_j,
\end{equation}
where $(X_j)_{j\ge1}$ are i.i.d. copies of $X$,
independent of ranked weights $(P_{n_k,j})_{j\ge1}$. 

For $r\ge1$, define
\begin{equation}\label{Znrdenin}
  Z_{n,r}
:=\sum_{j=1}^rP_{n,j}X_j
+\left(1-\sum_{j=1}^rP_{n,j}\right)\mu
\end{equation}
and
\begin{equation}\label{Zrdefin}
 Z_r
:=\sum_{j=1}^rP_jX_j
+\left(1-\sum_{j=1}^rP_j\right)\mu,
\end{equation}
where we set 
$$\mu=\mathbb EX.$$
By already proved (i) and the independence of $(X_j)$ from
$(P_{n_k,j})_{j\ge1}$, for every fixed $r\ge1$,
\[
(P_{n_k,1},\ldots,P_{n_k,r},X_1,\ldots,X_r)
\dto
(P_1,\ldots,P_r,X_1,\ldots,X_r).
\]
The continuous mapping theorem then gives
\begin{equation}\label{ZnkrdtoZr}
 Z_{n_k,r}\dto Z_r.
\end{equation}

\emph{Case 1 (bounded).} Suppose first that $X$ is bounded, so that
$\operatorname{Var}(X)<\infty$, and set
\[
\widetilde T_n(X):=\sum_{j\ge1}P_{n,j}X_j,
\qquad n\in\N.
\]
We shall show that $Z_r$ converges to $Z_X$
in $L^2$ and that $Z_{n_k,r}$ approximates
$\widetilde T_{n_k}(X)$ uniformly in probability
as $r\to\infty$. These properties, together
with \eqref{ZnkrdtoZr}, allow us to apply
the converging-together theorem
\cite[Theorem~3.2]{Billingsley1999}
to obtain $\widetilde T_{n_k}(X)\dto Z_X$,
and hence, by \eqref{T_n(X)distrXj},
$T_{n_k}(X)\dto Z_X$.
To do so note that $(P_{n,j})_{j\ge1}$ is nonincreasing
and $\sum_{j\ge1}P_{n,j}\le1$, therefore for every
integer $r\ge1$,
\[
(r+1)P_{n,r+1}
\le\sum_{j=1}^{r+1}P_{n,j}\le1.
\]
Consequently,
\[
\sum_{j>r}P_{n,j}^2
\le P_{n,r+1}\sum_{j>r}P_{n,j}
\le\frac1{r+1}.
\]
Similarly,
\[
\sum_{j>r}P_j^2
\le P_{r+1}\sum_{j>r}P_j
\le\frac1{r+1},
\qquad r\ge1.
\]
Moreover, since $(X_j-\mu)$ are independent,
centered, and independent of $(P_{n,j})_{j\ge1}$,
we have
\[
\mathbb E\left[
\sum_{j>r}P_{n,j}(X_j-\mu)
\,\middle|\,(P_{n,j})_{j\ge1}
\right]=0
\]
and
\[
\operatorname{Var}\left(
\sum_{j>r}P_{n,j}(X_j-\mu)
\,\middle|\,(P_{n,j})_{j\ge1}
\right)
=
\operatorname{Var}(X)\sum_{j>r}P_{n,j}^2.
\]
Thus, the conditional second moment equals
the conditional variance. Taking expectations,
we obtain
\begin{equation}\label{sumPnjsqupperbound}
  \mathbb E\left|
\sum_{j>r}P_{n,j}(X_j-\mu)
\right|^2
=\operatorname{Var}(X)\,
\mathbb E\sum_{j>r}P_{n,j}^2
\le\frac{\operatorname{Var}(X)}{r+1}.
\end{equation}
Similarly, 
\begin{equation}\label{sumPjsqupperbound}
\mathbb E\left|
\sum_{j>r}P_j(X_j-\mu)
\right|^2
=\operatorname{Var}(X)\,
  \mathbb E\sum_{j>r}P_j^2\\
\le\frac{\operatorname{Var}(X)}{r+1}.
\end{equation}
Now, by \eqref{Z_Xrepresentation} and \eqref{Zrdefin},
\[
Z_X-Z_r=\sum_{j>r}P_j(X_j-\mu).
\]
Applying \eqref{sumPjsqupperbound} gives
\[
\mathbb E|Z_X-Z_r|^2
\le\frac{\operatorname{Var}(X)}{r+1}
\longrightarrow0.
\]
Thus
\begin{equation}\label{Zr-L2}
Z_r\longrightarrow Z_X
\qquad\text{in }L^2.
\end{equation}
Furthermore, \eqref{Znrdenin} gives
\[
\widetilde T_n(X)-Z_{n,r}
=
\sum_{j>r}P_{n,j}(X_j-\mu)
-\mu\ind_{\{S_n=0\}}.
\]
Hence, by \eqref{sumPnjsqupperbound}
and Chebyshev's inequality, for every
$\varepsilon>0$,
\[
\mathbb P\bigl(
|\widetilde T_n(X)-Z_{n,r}|>\varepsilon
\bigr)
\le
\frac{\operatorname{Var}(X)}
{(r+1)\varepsilon^2}
+\mathbb P(Y=0)^n.
\]
Since $Y$ has unbounded support,
$\mathbb P(Y=0)<1$, and therefore, for every
$\varepsilon>0$,
\begin{equation}\label{TnZnr-approx}
\lim_{r\to\infty}\limsup_{k\to\infty}
\mathbb P\bigl(
|\widetilde T_{n_k}(X)-Z_{n_k,r}|
>\varepsilon
\bigr)=0.
\end{equation}
This verifies the approximation condition
of the converging together theorem
\cite[Theorem~3.2]{Billingsley1999}.

Combining \eqref{ZnkrdtoZr},
\eqref{Zr-L2}, and \eqref{TnZnr-approx},
the converging together theorem
\cite[Theorem~3.2]{Billingsley1999}
implies
\[
\widetilde T_{n_k}(X)\dto Z_X.
\]
Finally, since
$\widetilde T_{n_k}(X)\stackrel{d}{=}T_{n_k}(X)$
by \eqref{T_n(X)distrXj}, we conclude that
$T_{n_k}(X)\dto Z_X$.

\emph{Case 2 (unbounded).} Now let $X$ be arbitrary integrable. For fixed $a>0$ set
\[
X^{(a)}:=X\ind_{\{|X|\le a\}},
\qquad
\delta_a:=\mathbb E|X-X^{(a)}|.
\]
Since $X$ is integrable, $\delta_a\to0$
as $a\to\infty$.

Because $Y_i\ge0$, the normalized weights satisfy
\[
W_{n,i}:=\frac{Y_i}{S_n}\ind_{\{S_n>0\}}\ge0,
\qquad
\sum_{i=1}^n W_{n,i}
=\ind_{\{S_n>0\}}\le1.
\]
By the independence of $(X_i)$ and $(Y_i)$, for any $n \in \N$
we obtain
\[
\mathbb E|T_n(X)-T_n(X^{(a)})|
\le
\mathbb E\sum_{i=1}^n
W_{n,i}|X_i-X_i^{(a)}|
=\sum_{i=1}^n
\mathbb EW_{n,i}\,
\mathbb E|X-X^{(a)}|
=\mathbb P(S_n>0)\delta_a
\le\delta_a.
\]
Similarly, using the representation of $Z_X$
and the independence of $(X_j)$ from $P$,
$$
\mathbb E|Z_X-Z_{X^{(a)}}|
\le
\mathbb E\sum_{j\ge1}P_j
|X_j-X_j^{(a)}| +
\mathbb E\left(1-\sum_{j\ge1}P_j\right)
|\mathbb E(X-X^{(a)})|$$
$$\le
\mathbb E\sum_{j\ge1}P_j\,\delta_a
+
\mathbb E\left(1-\sum_{j\ge1}P_j\right)
\delta_a\\
=\delta_a.
$$
Consequently,
\[
\lim_{a\to\infty}\sup_{n\ge1}
\mathbb E|T_n(X)-T_n(X^{(a)})|=0
\]
and
\[
\lim_{a\to\infty}
\mathbb E|Z_X-Z_{X^{(a)}}|=0.
\]
In particular, for every $\varepsilon>0$,
Markov's inequality gives
\[
\lim_{a\to\infty}\limsup_{k\to\infty}
\mathbb P\bigl(
|T_{n_k}(X)-T_{n_k}(X^{(a)})|
>\varepsilon
\bigr)=0.
\]
Since the above proved bounded case gives
\[
T_{n_k}(X^{(a)})\dto Z_{X^{(a)}}
\qquad\text{for every fixed }a>0,
\]
and $Z_{X^{(a)}}\to Z_X$ in $L^1$,
another application of the converging together
theorem \cite[Theorem~3.2]{Billingsley1999}
implies
\[
T_{n_k}(X)\dto Z_X.
\]

Finally, we verify the absolute almost sure
convergence of the series defining $Z_X$.
Since $P_j\ge0$ and $(X_j)_{j\ge1}$ is
independent of $P$, 
\[
\mathbb E\left[
\sum_{j\ge1}P_j|X_j|
\,\middle|\,P\right]
=\mathbb E|X|\sum_{j\ge1}P_j
\le\mathbb E|X|<\infty.
\]
Taking expectations, we obtain
$\mathbb E\sum_{j\ge1}P_j|X_j|<\infty,$
consequently,
$
\sum_{j\ge1}P_j|X_j|<\infty
$
almost surely, which finishes the proof of statement (ii).

To prove (iii) note that if $\mathbb P(P_1>0)=0$,
$P_j=0$ almost surely for every $j$, and hence
$Z_X=\mathbb EX$ almost surely, leading to degenerate limiting law.

Conversely, suppose that
$\mathbb P(P_1>0)>0$ and $X$ is
nondegenerate. By~\eqref{Z_Xrepresentation},
\[
Z_X=P_1X_1+
\sum_{j\ge2}P_jX_j+
\left(1-\sum_{j\ge1}P_j\right)\mu.
\]
Conditioning on $P$ and $(X_j)_{j\ge2}$,
the last two terms are fixed, while $X_1$
remains independent of the conditioning
variables and has the same nondegenerate
distribution as $X$. Therefore, the conditional
distribution of $Z_X$ on the event $\{P_1>0\}$
is nondegenerate. Since the event $\{P_1>0\}$
has positive probability, $Z_X$ cannot be
almost surely constant, finishing proof of statement (iii).

To prove (iv), observe that the coordinate
map $p\mapsto p_1$ is bounded and continuous
on $\mathcal P_\downarrow$, equipped with
the topology of coordinatewise convergence.
Hence, by (i) and the Portmanteau theorem,
\[
\mathbb EP_{n_k,1}\longrightarrow
\mathbb EP_1
\qquad (k\to\infty).
\]
Next, consider
\[
F(p):=\sum_{j\ge1}p_j^2,
\qquad p\in\mathcal P_\downarrow.
\]
For every $r\ge1$, the function
$F_r(p):=\sum_{j=1}^rp_j^2$ is continuous
on $\mathcal P_\downarrow$. Moreover,
\[
0\le F(p)-F_r(p)
=\sum_{j>r}p_j^2
\le p_{r+1}\sum_{j>r}p_j
\le\frac1{r+1}.
\]
Consequently,
\[
\sup_{p\in\mathcal P_\downarrow}
|F(p)-F_r(p)|
\le\frac1{r+1}\longrightarrow0.
\]
Thus $F$ is the uniform limit of continuous
functions and hence is continuous.
Since $0\le F\le1$, the same argument gives
\[
\mathbb E\sum_{j\ge1}P_{n_k,j}^2
\longrightarrow
\mathbb E\sum_{j\ge1}P_j^2.
\]
This completes the proof of Lemma~\ref{lem_ranked-compactness}.
\end{proof}

\end{document}